\newtheorem{theorem}{Theorem}
\newtheorem{corollary}{Corollary}
\newtheorem{lemma}{Lemma}
\def\wH{\widetilde{H}}
\def\wk{\widetilde{K}}
\def\wE{\widetilde{E}}
\def\wN{\widetilde{N}}
\def\wL{\widetilde{L}}
\def\wK{\widetilde{K}}
\newcommand{\bint}{\displaystyle\int}
\journal{Journal}
\title{Mathematical modelling of heat transfer in closed electrical contacts and electrical potential field dynamics with Thomson effect}
\author{Targyn A. Nauryz$^{1,2}$, Stanislav N. Kharin$^{1,2}$ Adriana C. Briozzo$^{3,4}$, Julieta Bollati$^{3,4}$}
\address{$^1$International School of Economics, Kazakh-British Technical University, Tole Bi 59, Almaty, Kazakhstan\\
$^2$Institute of Mathematics and Mathematical Modeling, Pushkina 125, Almaty, Kazakhstan\\$^3$CONICET, Argentina\\$^4$Depto de Matem\'{a}tica, F.C.E.,Universidad Austral,\\
Paraguay 1950, S2000FZF Rosario, Argentina}
\begin{document}
\begin{frontmatter}
\begin{abstract}
In this study we develop a mathematical model that describe the behavior of electromagnetic fields and heat transfer in closed electrical contacts  that arises when instantaneous explosion of the micro-asperity which involves vaporization zone and liquid, solid zones where temperature is defined by a generalized heat equation with Thomson effect. This model account for the nonlinear nature of the thermal coefficients and electrical conductivity depended on temperature. Our proposed solutions are based on similarity transformation which allows us to reduce a Stefan-type problem to a system of nonlinear integral equations whose existence of solution is  proved by the fixed point theory in Banach spaces.
\end{abstract}
\begin{keyword}
Stefan problem \sep Generalized heat equation \sep Thomson effect \sep Similarity solution\sep Nonlinear integral equations \sep Nonlinear thermal coefficient\sep Fixed point theorem
\end{keyword}

\end{frontmatter}

\section{Introduction}

Stefan problems are fundamental in understanding phase transition phenomena, particularly in situations involving heat transfer and solidification processes. They were first introduced by Stefan J. in his seminal work in \cite{1}. These problems concern the determination of the moving boundary between phases during the process of solidification or melting.

The classical Stefan problem arises in scenarios where a material undergoes a phase change, such as freezing or melting, subject to certain boundary conditions and physical constraints. One of the key aspects of Stefan problems is the existence of a sharp interface, known as the Stefan interface, which separates the regions of different phases.

Caffarelli and Souganidis in \cite{2} proposed a rate equation approach to tackle Stefan problems, providing a mathematical framework to describe the evolution of the phase boundary. This approach has since been widely utilized in the analysis and numerical simulation of phase-change problems.


Theoretical investigations into Stefan problems have also contributed significantly to the field. Rubinstein's comprehensive work \cite{4} provides a mathematical foundation for understanding the Stefan problem from a theoretical perspective.  In \cite{5} Alexiades and Solomon delved into the mathematical intricacies of these processes, shedding light on the underlying principles governing phase transitions.

Furthermore, the study of free and moving boundary problems, including Stefan problems, has garnered considerable attention. In \cite{6} Crank provides a comprehensive treatment of such problems, offering valuable insights into their mathematical formulation and solution techniques.

In recent years, researchers have explored various aspects of Stefan problems, including the investigation of similarity solutions. Lipton and Witelski \cite{8} analyze similarity solutions of the one-phase Stefan problem, elucidating important characteristics of the phase-change process.

Stefan problems, which traditionally deal with phase-change phenomena under classical heat conduction assumptions, have seen extensions to encompass more complex physical scenarios. These extensions, often referred to as non-classical Stefan problems, involve variations in thermal coefficients, boundary conditions, or latent heat dependencies, among other factors. The investigation of non-classical Stefan problems has significant implications in various fields, including materials science, engineering, and mathematical physics.

One avenue of research in non-classical Stefan problems involves the consideration of thermal coefficients that vary with temperature or position. In \cite{19}, \cite{20} were  explored Stefan problems for diffusion-convection equations with temperature-dependent thermal coefficients, providing insights into the behavior of phase-change processes under such conditions. Similarly, in \cite{22}, \cite{23} Kumar et al. investigated Stefan problems with variable thermal coefficients, highlighting the impact of these variations on the phase-change dynamics.

Another aspect of non-classical Stefan problems involves incorporating convective boundary conditions or heat flux conditions on fixed faces. The study \cite{20} examined the existence of exact solutions for one-phase Stefan problems with nonlinear thermal coefficients, incorporating Tirskii's method to handle such complexities. Additionally, the paper \cite{21} addressed a one-phase Stefan problem for a non-classical heat equation with a heat flux condition on the fixed face, contributing to the understanding of phase-change phenomena under non-standard boundary conditions.

Moreover, the consideration of latent heat dependencies that vary with position adds another layer of complexity to non-classical Stefan problems. The work \cite{18} provided an explicit solution for a Stefan problem with latent heat depending on the position, utilizing Kummer functions to analyze the phase-change dynamics under such conditions.

Thermal phenomena in electrical apparatus, such as welding, arcing, and bridging, contribute to their failure and are highly complex. These phenomena depend on various factors including current, voltage, contact force, contact material properties, and arc duration \cite{24}, \cite{25}. Experimental investigations usually focus on cumulative probability representations of resulting values \cite{26}-\cite{28}, as direct experimental observation of these processes is often challenging or even impossible due to their extremely short duration.

Hence, mathematical modeling plays a crucial role in understanding the dynamics of such processes, improving the endurance and reliability of contact systems, and predicting and preventing failures in electrical apparatus. However, many existing mathematical models for describing processes like electrical contact welding have limitations. They often make simplifying assumptions, such as adiabatic heating conditions, neglecting phase transformations and dynamic changes in contact radius \cite{29}-\cite{32}. Additionally, important factors like bounce length and duration, time and spatial variations of welding characteristics, and properties of contact materials are often overlooked \cite{32}, \cite{33}.

Efforts have been made in \cite{34}-\cite{39} to address these aspects and the study of electrical contacts involves intricate thermal dynamics influenced by non-linearities in material properties and heat generation mechanisms. Non-linear Stefan problems offer a valuable mathematical framework to model and analyze these complex phenomena, providing insights into heat transfer processes during phase transitions within electrical contacts \cite{40}-\cite{45}. This paper aims to further develop the existing models while also incorporating the Thomson effect.

The Thomson effect refers to the phenomenon where a temperature difference is created across an electrical conductor when an electric current flows through it. This effect occurs due to the interaction between the current-carrying electrons and the lattice structure of the conductor.

In the context of a closure electrical contact after the instantaneous explosion of a micro-asperity, it's important to note that micro-asperities are tiny protrusions or irregularities on the surface of a material. An explosion or sudden release of energy can cause these micro-asperities to rupture or deform.

After such an explosion, the closure electrical contact may be affected in several ways. The intense energy release can lead to the melting or vaporization of the micro-asperities, altering the surface characteristics of the contact. This can potentially disrupt the normal flow of electric current and create temperature variations due to the Thomson effect.

The Thomson effect in this scenario could result in localized heating or cooling at the contact points, depending on the direction of the current flow. This temperature difference might affect the electrical conductivity and overall performance of the closure electrical contact.

It's worth noting that the specific details and consequences of the Thomson effect on a closure electrical contact after an instantaneous explosion of micro-asperity would depend on various factors such as the material properties, energy released, and contact configuration. Therefore, a detailed analysis considering these factors would be necessary to fully understand the implications of the Thomson effect in such a situation.

\begin{figure}\label{fig1}
\centering
\includegraphics[width=6 cm]{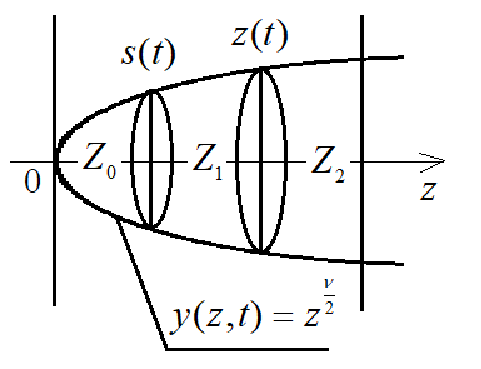}
\caption{Contact zones: $Z_0:(0<z<s(t))$-vaporization zone, $Z_1:(s(t)<z<r(t))$-liquid zone,  $Z_2:(r(t)<z)$-solid zone}
\end{figure} 

In the initial phase of a closed electrical contact, when a micro-asperity undergoes sudden ignition, the contact region comprises both a metallic vaporization zone and a liquid domain, see Figure \ref{fig1}. Modeling the metallic vapor zone, denoted as $Z_0$ with a height range of $0<z<s(t)$, is a complex undertaking. We propose that the temperature within this region decreases linearly from the ionization temperature of the metallic vapor, denoted as $T_{ion}$, which occurs after the explosion at the fixed face $z=0$, to the boiling temperature $T_b$ at the free boundary that separates the vapor and liquid phases. The temperature field within the vapor zone $Z_0$ exhibits a gradual and linear decrease
\begin{equation}\label{eq1}
    T_V(z,t)=\frac{z}{s(t)}(T_b-T_{ion})+T_{ion}, \quad 0\leq z\leq s(t),
\end{equation}
where the following boundary conditions hold
\begin{equation}\label{eq2}
    T_V(0,t)=T_{ion},
\end{equation}
\begin{equation}\label{eq3}
    T_V(s(t),t)=T_b.
\end{equation}
Temperature distribution and electrical potential field of the zones $Z_1$ and $Z_2$ are defined by 
\begin{equation}\label{eq4}
    c(T_1)\gamma(T_1)\dfrac{\partial T_1}{\partial t}=\dfrac{1}{z^{\nu}}\dfrac{\partial}{\partial z}\left[\lambda(T_1)z^{\nu}\dfrac{\partial T_1}{\partial z}\right]+\sigma_{T_1}\dfrac{\partial T_1}{\partial z}\dfrac{\partial\varphi_1}{\partial z}+\dfrac{1}{\rho(T_1)}\left(\dfrac{\partial\varphi_1}{\partial z}\right)^2,
\end{equation}
\begin{equation}\label{eq5}
    \dfrac{1}{z^{\nu}}\dfrac{\partial}{\partial z}\left[\dfrac{1}{\rho(T_1)}z^{\nu}\dfrac{\partial\varphi_1}{\partial z}\right]=0,\quad s(t)<z<r(t),\quad t>0,\quad 0<\nu<1,
\end{equation}
\begin{equation}\label{eq6}
    c(T_2)\gamma(T_2)\dfrac{\partial T_2}{\partial t}=\dfrac{1}{z^{\nu}}\dfrac{\partial}{\partial z}\left[\lambda(T_2)z^{\nu}\dfrac{\partial T_2}{\partial z}\right]+\sigma_{T_2}\dfrac{\partial T_2}{\partial z}\dfrac{\partial\varphi_2}{\partial z}+\dfrac{1}{\rho(T_2)}\left(\dfrac{\partial\varphi_2}{\partial z}\right)^2,
\end{equation}
\begin{equation}\label{eq7}
    \dfrac{1}{z^{\nu}}\dfrac{\partial}{\partial z}\left[\dfrac{1}{\rho(T_2)}z^{\nu}\dfrac{\partial\varphi_2}{\partial z}\right]=0,\quad r(t)<z,\quad t>0,\quad 0<\nu<1,
\end{equation}
\begin{equation}
    T_1(s(t),t)=T_b, \quad t>0,
\end{equation}
\begin{equation}\label{eq8}
    -\lambda\left(T_1(s(t),t)\right)\dfrac{\partial T_1}{\partial z}\Bigg|_{z=s(t)}=\dfrac{Q_0e^{-s_0^2}}{2a\sqrt{\pi t}},\quad t>0,
\end{equation}
\begin{equation}\label{eq9}
    \varphi_1(s(t),t)=0,\quad t>0,
\end{equation}
\begin{equation}\label{eq10}
    T_1(r(t),t)=T_2(r(t),t)=T_m>0,\quad t>0,
\end{equation}
\begin{equation}\label{eq11}
    \varphi_1(r(t),t)=\varphi_2(r(t),t),\quad t>0,
\end{equation}
\begin{equation}\label{eq12}
    -\lambda\left(T_1(r(t),t)\right)\dfrac{\partial T_1}{\partial z}\Bigg|_{z=r(t)}+\lambda\left(T_2(r(t),t)\right)\dfrac{\partial T_2}{\partial z}\Bigg|_{z=r(t)}=l_m\gamma_m\dfrac{dr}{dt},\quad t>0,
\end{equation}
\begin{equation}\label{eq13}
    \dfrac{1}{\rho(T_1(r(t),t))}\dfrac{\partial\varphi_1}{\partial z}\Bigg|_{z=r(t)}=\dfrac{1}{\rho(T_2(r(t),t))}\dfrac{\partial\varphi_2}{\partial z}\Bigg|_{z=r(t)},\quad t>0,
\end{equation}
\begin{equation}\label{eq14}
    T_2(+\infty,t)=0,\quad t>0,
\end{equation}
\begin{equation}\label{eq15}
    \varphi_2(+\infty,t)=\dfrac{U_c}{2},\quad t>0,
\end{equation}
\begin{equation}\label{eq16}
   T_2(z,0)=\varphi_2(z,0)=0,\quad z>0,\qquad s(0)=r(0)=0,\quad 
\end{equation}
where $T_1,\;T_2$ and $\varphi_1,\;\varphi_2$ are temperatures and electrical potential fields for liquid and solid zones, $c(T_i)$,$\gamma(T_i)$ and $\lambda(T_i)$ are specific heat, density and thermal conductivity which depend on the temperature, $\sigma_{T_i}$ is the Thomson coefficient, $\rho(T_i)$ is the electrical resistivity, $Q_0>0$ is the power of the heat flux, $T_m$ is the melting temperature, $U_c$ is the contact voltage, $s(t)$ and $r(t)$ are locations of the boiling and melting interfaces.

This paper is structured as follows. In Section 2, we use the similarity transformation to obtain an equivalent system of coupled integral equations for the problem \eqref{eq4}-\eqref{eq16}. In Section 3, we define proper spaces in order to apply the fixed point Banach theorem to prove 
the existence of solution to the system of coupled integral equations. 

The contribution of the problem addressed in our paper holds significant implications for electrical engineering. By developing a mathematical model that captures the behavior of electromagnetic fields and heat transfer in closed electrical contacts, particularly during instantaneous micro-asperity explosions, we offer valuable insights into the complex dynamics of these systems.

Our model accounts for the non-linear nature of thermal coefficients and temperature-dependent electrical conductivity, factors that are crucial in accurately representing real-world scenarios. By considering vaporization, liquid, and solid zones within the contact, we provide a comprehensive framework for analyzing the thermal and electromagnetic effects associated with such phenomena.

Furthermore, our approach, which utilizes similarity transformations to reduce the Stefan-type problem to a system of nonlinear integral equations, offers practical methodologies for analyzing and predicting the behavior of electrical contacts under extreme conditions. The rigorous establishment of the validity of this approach through discussions and proofs supported by fixed point theory in Banach space enhances the reliability and applicability of our proposed solutions.

Overall, our contribution bridges theoretical insights with practical applications, providing electrical engineers with tools and methodologies to better understand, design, and optimize the performance and reliability of electrical contacts in various operating conditions.

\section{Integral formulation}
In this section, taking into account that the problem \eqref{eq4}-\eqref{eq16} can be thought as a Stefan-type problem, 
we look for similarity type solutions that depend on the similarity variable 
$$
\eta=\dfrac{z}{2a\sqrt{t}},
$$
with $a=\sqrt{\tfrac{\lambda_0}{\rho_0 c_0}}$ where $\lambda_0$, $\rho_0$ and $c_0$ are  reference thermal coefficients.

We propose the following transformation
\begin{equation}\label{eq24}
 f_i(\eta)=\frac{T_i(z,t)-T_m}{T_m},\qquad \phi_i(\eta)= \varphi_i(z,t),\quad ,\quad i=1,2.
\end{equation}
According to this transformation, the location of the boiling and melting fronts are given by
\begin{equation}\label{eq25}
    s(t)=2as_0\sqrt{t},\qquad \qquad  r(t)=2ar_0\sqrt{t},
\end{equation}
where $s_0$ and $r_0$ must be determined as part of the solution.

Therefore, the  problem \eqref{eq4}-\eqref{eq16} can be rewritten as the following form
\begin{equation}\label{eq26}
    \left[L(f_i)\eta^{\nu}f_i'\right]'+2a\eta^{\nu+1}N(f_i)f_i'+\dfrac{\sigma_{f_i}}{c_0\gamma_0a}\eta^{\nu}f_i'\phi_i'+\dfrac{\eta^{\nu}}{c_0\gamma_0T_m a K(f_i)}\left(\phi_i'\right)^2=0,
\end{equation}
\begin{equation}\label{eq27}
    \left[\dfrac{1}{K(f_i)}\eta^{\nu}\phi_i'\right]'=0,
\end{equation}
\begin{equation*}
    i=1:\quad s_0<\eta<r_0,\qquad \quad i=2:\quad \eta>r_0,
\end{equation*}
\begin{equation}\label{s0}
    f_1(s_0)=B,
\end{equation}
\begin{equation}\label{eq28}
    L(f_1(s_0))f_1'(s_0)=-Qe^{-s_0^2},
\end{equation}
\begin{equation}\label{eq29}
    \phi_1(s_0)=0,
\end{equation}
\begin{equation}\label{eq30}
    f_1(r_0)=f_2(r_0)=0,
\end{equation}
\begin{equation}\label{eq31}
    \phi_1(r_0)=\phi_2(r_0),
\end{equation}
\begin{equation}\label{eq32}
    -L(f_1(r_0))f_1'(r_0)=-L(f_2(r_0))f_2'(r_0)+M r_0,
\end{equation}
\begin{equation}\label{eq33}
    \dfrac{1}{K(f_1(r_0))}\phi_1'(r_0)=\dfrac{1}{K(f_2(r_0))}\phi_2'(r_0),
\end{equation}
\begin{equation}\label{eq34}
    f_2(+\infty)=-1,
\end{equation}
\begin{equation}\label{eq35}
    \phi_2(+\infty)=\dfrac{U_c}{2},
\end{equation}
where 
\begin{equation}\label{eq36}
 B=\frac{T_b-T_m}{T_m},\qquad   Q=\dfrac{Q_0}{\lambda_0T_m\sqrt{\pi}}>0,\quad M=\dfrac{2l_m\gamma_m a^2}{\lambda_0T_m}>0
\end{equation}
and for $i=1,2$:
\begin{equation}\label{N}
    N(f_i)=\frac{c(f_iT_m+T_m)\gamma(f_iT_m+T_m)}{c_0\gamma_0}, 
\end{equation}
\begin{equation}\label{L}
    L(f_i)=\frac{\lambda(f_iT_m+T_m)}{\lambda_0}, 
\end{equation}
\begin{equation}\label{K}
    K(f_i)=\rho(f_iT_m+T_m), 
\end{equation}
\begin{equation}\label{sigma}
    \sigma_{f_i}=\sigma_{T_i}, 
\end{equation}
From \eqref{eq27}, \eqref{eq29}, \eqref{eq31}, \eqref{eq33} and \eqref{eq35}, we obtain the solution for electrical potential field for liquid and solid zones explicitly in function of $f_1,f_2,s_0$ and $r_0$ as
\begin{equation}\label{phi1}
    \phi_1(\eta,s_0,r_0,f_1,f_2)= \dfrac{U_c F_1(\eta,s_0,f_1)}{2H(r_0,s_0,f_1,f_2)},\quad s_0\leq \eta \leq r_0,
\end{equation}
\begin{equation}\label{phi2}
    \phi_2(\eta,s_0,r_0,f_1,f_2)=\dfrac{U_c\left(F_1(r_0,s_0,f_1)+F_2(\eta,r_0,f_2)\right)}{2H(r_0,s_0,f_1,f_2)},\quad \eta\geq r_0,
\end{equation}
where 
\begin{equation}\label{F1}
    F_1(\eta,s_0,f_1)=\int\limits_{s_0}^{\eta}\dfrac{K(f_1(v))}{v^{\nu}}dv,\quad s_0\leq\eta\leq r_0,
\end{equation}
\begin{equation}\label{F2}
    F_2(\eta,r_0,f_2)=\bint\limits_{r_0}^{\eta}\dfrac{K(f_2(v))}{v^{\nu}}dv,\quad \eta\geq r_0,
\end{equation}and
\begin{equation}\label{H}
H(r_0,s_0,f_1,f_2)=F_1(r_0,s_0,f_1)+F_2(+\infty,r_0,f_2).
\end{equation}
In addition, from $\eqref{eq26}$, $\eqref{eq28}$ and $\eqref{eq30}$, we get
    \[f_1(\eta)=s_0^{\nu}Q\exp(-s_0^{2})\left[\Phi_1(r_0,s_0,f_1,f_2)-\Phi_1(\eta,s_0,f_1,f_2)\right]
    \]
  \begin{equation}\label{eq41}
  + \frac{ D_1^*}{H^{2}(r_0,s_0,f_1,f_2)}\left[{G_1(r_0,s_0,f_1,f_2)}-G_1(\eta,s_0,f_1,f_2)\right],\quad s_0\leq\eta\leq r_0, 
\end{equation}
and from $\eqref{eq26}$, $\eqref{eq30}$ and \eqref{eq34} we get
 \[
    f_2(\eta)=\left[\frac{D_2^*}{H^{2}(r_0,s_0,f_1,f_2)}G_2(+\infty,r_0,f_1,f_2)-1\right]\frac{\Phi_2(\eta,r_0,f_1,f_2)}{\Phi_2(+\infty,r_0,f_1,f_2)}
  \] 
 \begin{equation}\label{eq42} 
 -\frac{D_2^*}{H^{2}(r_0,s_0,f_1,f_2)}G_2(\eta,r_0,f_1,f_2), \quad \eta\geq r_0.
  \end{equation}
  Moreover, from conditions \eqref{s0} and \eqref{eq32} we obtain the following equations
  \begin{equation}\label{Ec1-s0,r0}
  s_0^{\nu}Q\exp(-s_0^{2})\Phi_1(r_0,s_0,f_1,f_2)+ \frac{ D_1^*}{H^{2}(r_0,s_0,f_1,f_2)}{G_1(r_0,s_0,f_1,f_2)}=B,
\end{equation}
and
   \[ E_1(r_0,s_0,f_1,f_2)\left[Q\exp(-s_0^{2})s_0^{\nu}+\frac{ D_1^*}{H^{2}(r_0,s_0,f_1,f_2)}H_1(r_0,s_0,f_1,f_2)\right]
   \]
   \begin{equation}\label{Ec2-s0,r0}-\frac{1}{\Phi_2(+\infty,r_0,f_1,f_2)}\left[1-\frac{ D_2^*}{H^{2}(r_0,s_0,f_1,f_2)}G_2(+\infty,r_0,f_1,f_2)\right]=M r_0^{\nu+1}
\end{equation}  
where
\begin{equation}\label{eq43}
\Phi_1(\eta,s_0,f_1,f_2)=\bint\limits_{s_0}^{\eta}\dfrac{E_1(v,s_0,f_1,f_2)}{L(f_1(v))v^{\nu}}dv,\quad s_0\leq\eta\leq r_0,
\end{equation}
\begin{equation}\label{eq44}
\Phi_2(\eta,r_0,f_1,f_2)=\bint\limits_{r_0}^{\eta}\dfrac{E_2(v,r_0,f_1,f_2)}{L(f_2(v))v^{\nu}}dv,\quad \eta\geq r_0,
\end{equation}
\begin{equation}\label{eq45}
    G_1(\eta,s_0,f_1,f_2)=\bint\limits_{s_0}^{\eta}\dfrac{E_1(v,s_0,f_1,f_2)}{L(f_1(v))v^{\nu}}H_1(v,r_0,f_1,f_2)dv,\quad s_0\leq\eta\leq r_0,
\end{equation}
\begin{equation}\label{eq46}
G_2(\eta,r_0,f_1,f_2)=\bint\limits_{r_0}^{\eta}\dfrac{E_2(v,s_0,f_1,f_2)}{L(f_2(v))v^{\nu}}H_2(v,r_0,f_1,f_2)dv\quad \eta\geq r_0
\end{equation}
\begin{equation}\label{defH1}
H_1(\eta,s_0,f_1,f_2)=\bint\limits_{s_0}^{\eta}\dfrac{K(f_1(v))}{v^{\nu}E_1(v,s_0,f_1,f_2)}dv,\quad s_0\leq\eta\leq r_0,
\end{equation}
\begin{equation}
H_2(\eta,r_0,f_1,f_2)=\bint\limits_{r_0}^{\eta}\dfrac{K(f_2(v))}{v^{\nu}E_2(v,r_0,f_1,f_2)}dv\quad \eta\geq r_0
\end{equation}
\begin{equation}\label{defE1}
    E_1(\eta,s_0,f_1,f_2)=\exp\left(-\bint\limits_{s_0}^{\eta}\left[2av\tfrac{N(f_1(v))}{L(f_1(v))}+\tfrac{D_1}{H(r_0,s_0,f_1,f_2)}\tfrac{K(f_1(v))}{L(f_1(v))v^{\nu}}\right]dv\right),\quad s_0\leq\eta\leq r_0,
\end{equation}
\begin{equation}\label{eq48}
    E_2(\eta,r_0,f_1, f_2)=\exp\left(-\bint\limits_{r_0}^{\eta}\left[2av\tfrac{N(f_2(v))}{L(f_2(v))}+\tfrac{D_2}{H(r_0,s_0,f_1,f_2)}\tfrac{K(f_2(v))}{L(f_2(v))v^{\nu}}\right]dv\right),\quad \eta\geq r_0,
\end{equation}
and the coefficients $D_i$ and   $D_i^*$ for $i=1,2$ are given by
\begin{equation}\label{D1D2}
    D_i=\dfrac{\sigma_{f_i}U_c}{2c_0\gamma_0a},\qquad \qquad D_i^*=\frac{U_c D_i}{2}.
\end{equation}

In conclusion, to find a similarity solution to the problem  \eqref{eq4}-\eqref{eq16} is equivalent to obtain $f_1$, $f_2$, $s_0$ and $r_0$ such that  \eqref{eq41}, \eqref{eq42}, \eqref{Ec1-s0,r0} and \eqref{Ec2-s0,r0} hold. Notice that the electric potential fields $\phi_1$ and $\phi_2$ are explicitly given by \eqref{phi1} and \eqref{phi2} in function of  $f_1$, $f_2$, $s_0$ and $r_0$. 

In the next section, to address the existence and uniqueness of solutions, we employ a rigorous analytical approach. We leverage similarity transformations to reduce the problem to a set of ordinary differential equations, facilitating a more tractable analysis. Additionally, we draw upon fixed point theory in Banach space to establish the validity of our proposed solutions.

Through thorough discussions and rigorous proofs, we demonstrate the soundness of our approach and the robustness of our solutions. This section not only provides theoretical insights into the existence and uniqueness of solutions within the context of electrical contact phenomena but also offers practical methodologies for analyzing and predicting the behavior of such systems under extreme conditions.

Overall, our analysis in this section contributes to a deeper understanding of the mathematical underpinnings of electromagnetic fields and heat transfer in closed electrical contacts. By establishing the existence and uniqueness of solutions, we lay a solid foundation for further exploration and optimization of electrical contact systems in various engineering applications.

\section{Existence of solution}
In order to prove the existence and uniqueness of solution $f_1,f_2$ to equations \eqref{eq41} and \eqref{eq42}, we assume fixed positive constants $0<s_0<r_0$ and consider the Banach space 
\begin{equation}\label{mathcal C}
\mathcal{C}=C[s_0,r_0]\times C_b[r_0,+\infty)    
\end{equation}
endowed with the norm 
$$||\vec{f}||=\Vert (f_1,f_2)\Vert=\max\left\lbrace ||f_1||_{C[s_0,r_0]},||f_2||_{C_b[r_0,+\infty)} \right\rbrace$$
 where $C[s_0,r_0]$ denotes the space of continuous functions defined on the interval $[s_0,r_0]$ and $C_b[r_0,+\infty)$ represents the space of continuous and bounded functions on the interval $[r_0,+\infty).$
We define the closed subset $\mathcal{M}$ of $C_b[r_0,+\infty)$ by
$$\mathcal{M}=\lbrace f_2\in C_b[r_0,+\infty):  f_2(r_0)=0, f_2(+\infty)=-1 \rbrace.$$
We consider the operator $\Psi$ on $\mathcal{K}=C[s_0,r_0]\times \mathcal{M}$ given by
\begin{equation}\label{Psi}
    \Psi(\vec{f})=(V_1(\vec{f}),V_2(\vec{f})),
\end{equation}
 where  $V_1(\vec{f})$, $V_2(\vec{f})$ are defined by
\begin{equation}\label{V1}
    \begin{array}{llll}
        &V_1(\vec{f})(\eta)  =s_0^{\nu}Q\exp(-s_0^{2})\left[\Phi_1(r_0,s_0,f_1,f_2)-\Phi_1(\eta,s_0,f_1,f_2)\right] \\ \\
  &+ \frac{ D_1^*}{H^{2}(r_0,s_0,f_1,f_2)}\Big[{G_1(r_0,s_0,f_1,f_2)}-G_1(\eta,s_0,f_1,f_2)\Big],\quad s_0\leq\eta\leq r_0, 
  \end{array}
\end{equation}

 \begin{equation}
 \begin{array}{llll}
    &V_2(\vec{f})(\eta)=\Big[\frac{D_2^*}{H^{2}(r_0,s_0,f_1,f_2)}G_2(+\infty,r_0,f_1,f_2)-1\Big]\frac{\Phi_2(\eta,r_0,f_1,f_2)}{\Phi_2(+\infty,r_0,f_1,f_2)}\\ \\
 &-\frac{D_2^*}{H^{2}(r_0,s_0,f_1,f_2)}G_2(\eta,r_0,f_1,f_2), \quad \eta\geq r_0.
  \end{array}
  \end{equation}
  Notice that  solving the system of equations \eqref{eq41} and \eqref{eq42} is equivalent to obtain a fixed point  to the operator $\Psi$.

Taking into account that $\mathcal{K}$ is a closed subset of $\mathcal{C}$ we will prove that $\Psi(\mathcal{K})\subset \mathcal{K}$ and $\Psi$ is a contraction mapping in order to apply the fixed point Banach theorem.

For this purpose we will assume that there exists  positive coefficients $\mu$, $L_{im},\;L_{iM},\;N_{im}$ and $N_{iM}$, $\widetilde{L}_i$ , $\widetilde{N}_i$ and $\widetilde{K}_i$ for $i=1,2$ such that

\begin{enumerate}
\item[(A1)] for each $f_1\in C[s_0,r_0]: s_0\leq v\leq r_0 $
    \begin{equation}\label{cotaLf1}
        L_{1m}\eta^{\mu}\leq L(f_1)(\eta)\leq L_{1M}\eta^{\mu},
    \end{equation}
    \begin{equation}\label{cotaNf1}
        N_{1m}\eta^{-\mu}\leq N(f_1)(\eta)\leq N_{1M}\eta^{-\mu},
    \end{equation}
    \begin{equation}\label{cotaKf1}
        K_{1m}\eta^{-\mu}\leq K(f_1)(\eta)\leq K_{1M}\eta^{-\mu},
    \end{equation}
 \item[(A2)]  for each $f_2\in \mathcal{M},\;\eta\geq r_0: $
    \begin{equation}\label{cotaLf2}
        L_{2m} \eta^{\mu}\leq L(f_2)(\eta)\leq L_{2M}\eta^{\mu} ,
    \end{equation}
    \begin{equation}\label{cotaNf2}
        N_{2m} \eta^{-\mu}\leq N(f_2)(\eta)\leq N_{2M}\eta^{-\mu} ,
    \end{equation}
    \begin{equation}\label{cotaKf2}
       K_{2m} \eta^{-\mu}\leq K(f_2)(\eta)\leq K_{2M}\eta^{-\mu} ,
    \end{equation}

    \item[(A3)] for each $f_1, g_1\in C[s_0,r_0], s_0\leq \eta\leq r_0 $:
    \begin{equation}\label{LipschitzL1}
        |L(f_1(\eta))-L(g_1(\eta))|\leq \widetilde{L}_1||f_1-g_1||,
    \end{equation}
    \begin{equation}\label{LipschitzN1}
        |N(f_1(\eta))-N(g_1(\eta))|\leq \widetilde{N}_1||f_1-g_1||,
    \end{equation}
\begin{equation}\label{LipschitzK1}
        |K(f_1(\eta))-K(g_1(\eta))|\leq \widetilde{K}_1 \eta^{-\mu}||f_1-g_1||,
    \end{equation}

     \item[(A4)] for each $f_2, g_2\in \mathcal{M}, \eta\geq r_0$ :
    \begin{equation}\label{LipschitzL2}
        |L(f_2(\eta))-L(g_2(\eta))|\leq \widetilde{L}_2||f_2-g_2||,
    \end{equation}
    \begin{equation}\label{LipschitzN2}
        |N(f_2(\eta))-N(g_2(\eta))|\leq \widetilde{N}_2||f_2-g_2||,
    \end{equation}
\begin{equation}\label{LipschitzK2}
        |K(f_2(\eta))-K(g_2(\eta))|\leq \widetilde{K}_2 \eta^{-\mu}||f_2-g_2||,
    \end{equation}
    
    \item[(A5)]   $\mu>2$.
\end{enumerate}

From now on, hypothesis (A1)-(A5) will be assumed throughout the paper.
\bigskip

We will present preliminary results that will be useful to prove the existence and uniqueness of a fixed point to the operator $\Psi$.

\begin{lemma}\label{lema-cotaH} For every $\vec{f}=(f_1,f_2)$, $ \vec{g}=(g_1,g_2)\in\mathcal{K}$, the following inequalities hold: 
    \begin{equation}\label{cotaHinf}
         H(r_0,s_0,f_1,f_2)\geq H_{inf}(r_0,s_0),
    \end{equation}
    \begin{equation}\label{cotaHsup}
        H(r_0,s_0,f_1,f_2)\leq H_{sup}(r_0,s_0),
    \end{equation}
    \begin{equation}\label{difH}
        |H(r_0,s_0,f_1,f_2)-H(r_0,s_0,g_1,g_2)|\leq \wH(r_0,s_0) ||\vec{f}-\vec{g}||,
    \end{equation}
  where
    \begin{equation}\label{Hinf}
        H_{inf}(r_0,s_0):=\tfrac{K_{1m} }{\mu+\nu-1} \left(\tfrac{1}{s_0^{\mu+\nu-1}}-\tfrac{1}{r_0^{\mu+\nu-1}} \right)
    \end{equation}
    \begin{equation}\label{Hsup}
        H_{sup}(r_0,s_0):=\tfrac{1 }{\mu+\nu-1} \left(\tfrac{K_{1M}}{s_0^{\mu+\nu-1}}+\tfrac{K_{2M}}{r_0^{\mu+\nu-1}} \right)
    \end{equation}
    \begin{equation}\label{Hraya}
        \wH(r_0,s_0):=\tfrac{1 }{\mu+\nu-1} \left(\tfrac{\wK_1}{s_0^{\mu+\nu-1}}+\tfrac{\wK_2}{r_0^{\mu+\nu-1}} \right) 
    \end{equation}
\end{lemma}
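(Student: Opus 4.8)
The plan is to exploit the explicit representation of $H$ as a sum of two integrals of the resistivity against the weight $v^{-\nu}$. Combining \eqref{H}, \eqref{F1} and \eqref{F2} yields
\[
H(r_0,s_0,f_1,f_2)=\int_{s_0}^{r_0}\frac{K(f_1(v))}{v^{\nu}}\,dv+\int_{r_0}^{+\infty}\frac{K(f_2(v))}{v^{\nu}}\,dv,
\]
so every estimate reduces to controlling the integrand pointwise by the power-type bounds of (A1)--(A4) and integrating the elementary function $v^{-\mu-\nu}$. The key computation, valid because $\mu+\nu-1>0$, is $\int_{s_0}^{r_0}v^{-\mu-\nu}\,dv=\tfrac{1}{\mu+\nu-1}\bigl(s_0^{-(\mu+\nu-1)}-r_0^{-(\mu+\nu-1)}\bigr)$ together with $\int_{r_0}^{+\infty}v^{-\mu-\nu}\,dv=\tfrac{1}{\mu+\nu-1}\,r_0^{-(\mu+\nu-1)}$.

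For the lower bound \eqref{cotaHinf}, I would discard the second integral, which is nonnegative since $K$ is an electrical resistivity and hence $K\geq 0$, and bound the first integrand from below by the left inequality in \eqref{cotaKf1}, namely $K(f_1(v))\geq K_{1m}v^{-\mu}$; the first displayed integral then produces exactly $H_{inf}$ as in \eqref{Hinf}. For the upper bound \eqref{cotaHsup}, I would instead invoke the right inequalities $K(f_1(v))\leq K_{1M}v^{-\mu}$ and $K(f_2(v))\leq K_{2M}v^{-\mu}$ from \eqref{cotaKf1} and \eqref{cotaKf2}, and discard the negative endpoint term $-r_0^{-(\mu+\nu-1)}$ coming from the first integral, so as to match the form of $H_{sup}$ in \eqref{Hsup}.

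For the Lipschitz-type estimate \eqref{difH}, I would subtract the two integral representations of $H(\cdots f_1,f_2)$ and $H(\cdots g_1,g_2)$, apply the triangle inequality under the integral sign, and use the Lipschitz bounds \eqref{LipschitzK1} and \eqref{LipschitzK2}, which give $|K(f_i(v))-K(g_i(v))|\leq \widetilde{K}_i v^{-\mu}\|f_i-g_i\|$ for $i=1,2$. Since $\|f_i-g_i\|\leq\|\vec{f}-\vec{g}\|$ by the definition of the norm on $\mathcal{C}$, factoring out $\|\vec{f}-\vec{g}\|$ and evaluating the same two power integrals, again bounding the first above by $\tfrac{1}{(\mu+\nu-1)\,s_0^{\mu+\nu-1}}$ by dropping its negative endpoint term, delivers the constant $\widetilde{H}$ in \eqref{Hraya}.

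The only genuine subtlety is the convergence of the improper integral $\int_{r_0}^{+\infty}v^{-\mu-\nu}\,dv$, which requires the decay exponent to satisfy $\mu+\nu>1$; this is guaranteed by hypothesis (A5), since $\mu>2$ and $\nu>0$ force $\mu+\nu-1>1>0$. Everything else is a routine use of the monotonicity and linearity of the integral together with the explicit antiderivative of a power, so I do not anticipate any real difficulty beyond keeping careful track of which endpoint term to discard in order to reproduce the stated forms of $H_{inf}$, $H_{sup}$ and $\widetilde{H}$.
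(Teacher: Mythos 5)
Your proposal is correct and follows essentially the same route as the paper: pointwise bounds on the integrands from (A1)--(A4), explicit integration of $v^{-\mu-\nu}$ over $[s_0,r_0]$ and $[r_0,+\infty)$, and discarding the appropriate nonnegative or negative endpoint terms to reach the stated forms of $H_{inf}$, $H_{sup}$ and $\widetilde{H}$. The only cosmetic difference is that for the lower bound the paper keeps the second integral, bounds it below by $\tfrac{K_{2m}}{(\mu+\nu-1)r_0^{\mu+\nu-1}}\geq 0$ and then drops it, whereas you drop it at once by nonnegativity of $K$; both are valid under (A2).
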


\begin{proof}
    Taking into account the definition of $H$ given by \eqref{H} and assumptions \eqref{cotaKf1} and \eqref{cotaKf2} we have
$$\begin{array}{ll}
H(r_0,s_0,f_1,f_2)\geq \tfrac{K_{1m} }{\mu+\nu-1} \left(\tfrac{1}{s_0^{\mu+\nu-1}}-\tfrac{1}{r_0^{\mu+\nu-1}} \right)+\tfrac{K_{2m}}{\mu+\nu-1} \tfrac{1}{r_0^{\mu+\nu-1}}\\ \\
\geq \tfrac{K_{1m} }{\mu+\nu-1} \left(\tfrac{1}{s_0^{\mu+\nu-1}}-\tfrac{1}{r_0^{\mu+\nu-1}} \right),
\end{array}$$
and then we get \eqref{cotaHinf}.
Inequality \eqref{cotaHsup} follows analogously. In addition, taking into account assumptions \eqref{LipschitzK1} and \eqref{LipschitzK2} we get
\begin{equation*}
\begin{array}{ll}
       & |H(r_0,s_0,f_1,f_2)-H(r_0,s_0,g_1,g_2)|\\[0.35cm]
        & \leq \left(\wk_1 \bint_{s_0}^{r_0} \tfrac{1}{v^{\mu+\nu}} dv+\wk_2 \bint_{r_0}^{+\infty} \tfrac{1}{v^{\mu+\nu}}  dv \right)||\vec{f}-\vec{g}|| \\ \\
       &\leq \tfrac{1 }{\mu+\nu-1} \left(\wK_1 \left(\tfrac{1}{s_0^{\mu+\nu-1}}-\tfrac{1}{r_0^{\mu+\nu-1}}\right)+\tfrac{\wK_2}{r_0^{\mu+\nu-1}} \right) ||\vec{f}-\vec{g}|| \\ \\
       &\leq \tfrac{1 }{\mu+\nu-1} \left(\tfrac{\wK_1}{s_0^{\mu+\nu-1}}+\tfrac{\wK_2}{r_0^{\mu+\nu-1}} \right)||\vec{f}-\vec{g}||,
        \end{array}
    \end{equation*}
as a consequence, inequality \eqref{difH} holds.
\end{proof}

\begin{lemma}\label{lema-cotaFase1} For every $\vec{f}=(f_1,f_2)$,  $\vec{g}=(g_1,g_2)\in\mathcal{K}$, the following inequalities hold: 
\begin{enumerate}
    
     \item  
    \begin{equation}\label{cotaE1inf}
        E_1(\eta,s_0,f_1,f_2)\geq E_{1 inf}(r_0,s_0)
    \end{equation}
    \begin{equation}\label{cotaE1sup}
        E_1(\eta,s_0,f_1,f_2)\leq 1
    \end{equation}
    \begin{equation}\label{difE1}
        |E_1(\eta,s_0,f_1,f_2)-E_1(\eta,s_0,g_1,g_2)|\leq \wE_1(r_0,s_0) ||\vec{f}-\vec{g}||
    \end{equation}
where
 \begin{equation}\label{defE1inf}
 E_{1inf}(r_0,s_0):=\exp\left(-\left[  a  \tfrac{N_{1M} }{L_{1m} (\mu-1)} \tfrac{1}{ s_0^{2\mu-2}}+\tfrac{D_1 K_{1M} }{H_{inf}(r_0,s_0) L_{1m}(2\mu+\nu-1)} \tfrac{1}{s_0^{2\mu+\nu-1}}\right]\right)
 \end{equation}
 \begin{equation}\label{E1raya}
 \begin{array}{l}
 \widetilde{E}_1(r_0,s_0):= 2a \left[ \tfrac{ \wN_1 }{L_{1m} (\mu-2)}\tfrac{1}{s_0^{\mu-2}}+\tfrac{ N_{1M} \wL_1}{L_{1m}^2 (3\mu-2)} \tfrac{1}{s_0^{3\mu-2}}  \right]\\ \\
 +D_1\Big(  \tfrac{\wK_1}{H_{inf}(r_0,s_0) L_{1m} (2\mu+\nu-1)}\tfrac{1}{s_0^{2\mu+\nu-1}}\\ \\+\tfrac{K_{1M}}{H_{inf}(r_0,s_0) L_{1m}} \Big( \tfrac{\widetilde{H}(r_0,s_0)}{H_{inf}(r_0,s_0) (2\mu+\nu-1)}\tfrac{1}{s_0^{2\mu+\nu-1}}+ \tfrac{\widetilde{L}_1}{ L_{1m} (3\mu+\nu-1)}\tfrac{1}{s_0^{3\mu+\nu-1}} \Big)\Big)
 \end{array}
 \end{equation}

\item 
    \begin{equation}\label{Resta Phi 1}
|\Phi_1(\eta,s_0,f_1,f_2)-\Phi_1(\eta,s_0,g_1,g_2)|\leq \widetilde{\Phi}_1(r_0,s_0)||\vec{f}-\vec{g}||
\end{equation}

  where  
  \begin{equation}\label{Phi1raya}
 \widetilde{\Phi}_1(r_0,s_0):=\tfrac{\wE_1(r_0,s_0) }{L_{1m}(\mu+\nu-1)}\frac{1}{s_0^{\nu+\mu-1}}+\tfrac{\widetilde{L}_1}{L_{1m}^{2}(2\mu+\nu-1)}\frac{1}{s_0^{\nu+2\mu-1}}
 \end{equation}

\item 
\begin{equation}\label{cotaH1inf}
         H_1(\eta,s_0,f_1,f_2)\geq H_{1 inf}(\eta, s_0)
    \end{equation}
    \begin{equation}\label{cotaH1sup}
      H_1(\eta,s_0,f_1,f_2)\leq H_{1sup}(r_0,s_0)
    \end{equation}
    \begin{equation}\label{Resta H1}
|H_1(\eta,s_0,f_1,f_2)-H_1(\eta,s_0,g_1,g_2)|\leq \widetilde{H}_1(r_0,s_0)||\vec{f}-\vec{g}||
\end{equation}
 where  
 \begin{equation}\label{H1inf}
H_{1 inf}(\eta,s_0):= \tfrac{K_{1m}}{(\mu+\nu-1)} \left(\tfrac{1}{s_0^{\mu+\nu-1}}-\tfrac{1}{\eta^{\mu+\nu-1}}\right)
 \end{equation}  
    \begin{equation}\label{H1sup}
H_{1 sup}(r_0,s_0):= \tfrac{K_{1M}}{E_{1inf}(r_0,s_0)} \tfrac{1}{(\mu+\nu-1)} \tfrac{1}{s_0^{\mu+\nu-1}}
\end{equation}
 \begin{equation}\label{H1raya}
 \widetilde{H}_1(r_0,s_0):= \left( \wK_1+ \tfrac{K_{1M} \wE_1(r_0,s_0)}{E_{1inf}(r_0,s_0)}\right) \tfrac{1}{E_{1inf}(r_0,s_0) (\mu+\nu-1)} \tfrac{1}{s_0^{\mu+\nu-1}}
 \end{equation}
\item 
\begin{equation}\label{cotaG1inf}
         G_1(\eta,s_0,f_1,f_2)\geq G_{1 inf}(\eta, r_0,s_0)
    \end{equation}
    \begin{equation}\label{cotaG1sup}
      G_1(\eta,s_0,f_1,f_2)\leq G_{1sup}(r_0,s_0)
    \end{equation}
    \begin{equation}\label{Resta G 1}
|G_1(\eta,s_0,f_1,f_2)-G_1(\eta,s_0,g_1,g_2)|\leq \widetilde{G}_1(r_0,s_0)||\vec{f}-\vec{g}||
\end{equation}
 where 
 \begin{equation}\label{G1inf}
G_{1 inf}(\eta,r_0,s_0):= \tfrac{K_{1m} E_{1inf}(r_0,s_0) }{2 L_{1M} (\mu+\nu-1)^2} \left(\tfrac{1}{s_0^{\mu+\nu-1}}-\tfrac{1}{\eta^{\mu+\nu-1}} \right)^2,
  \end{equation}  
    \begin{equation}\label{G1sup}
G_{1 sup}(r_0,s_0):= \tfrac{H_{1sup}(r_0,s_0)}{L_{1m}} \tfrac{1}{(\mu+\nu-1)}\tfrac{1}{s_0^{\mu+\nu-1}},
\end{equation}
 \begin{equation}\label{G 1 raya}
 \widetilde{G}_1(r_0,s_0):= H_{1sup}(r_0,s_0) \widetilde{\Phi}_1(r_0,s_0)+ \tfrac{\widetilde{H}_1(r_0,s_0)}{L_{1m}}  \tfrac{1}{(\mu+\nu-1)}\tfrac{1}{s_0^{\mu+\nu-1}}.
 \end{equation}
\end{enumerate}
\end{lemma}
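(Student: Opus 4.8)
The plan is to treat Lemma~\ref{lema-cotaFase1} as a cascade of estimates in which the bounds and Lipschitz constant for $E_1$ drive everything else, and then to obtain the estimates for $\Phi_1$, $H_1$ and $G_1$ in that order, reusing the previous ones as inputs. Three elementary tools will be used repeatedly: (i) the fact that $x\mapsto e^{-x}$ is $1$-Lipschitz and bounded by $1$ on $[0,\infty)$; (ii) the algebraic identities $\frac{a}{b}-\frac{c}{d}=\frac{a-c}{b}+c\,\frac{d-b}{bd}$ for quotients and $ab-cd=a(b-d)+d(a-c)$ for products, which split each difference into pieces controlled by a single Lipschitz hypothesis at a time; and (iii) the monomial estimate $\int_{s_0}^{\eta}v^{-\alpha}\,dv\le \frac{1}{(\alpha-1)\,s_0^{\alpha-1}}$ valid for $\alpha>1$, with $\eta\le r_0$. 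Throughout, assumptions (A1)--(A4) convert $L,N,K$ and their increments into powers of $v$, while Lemma~\ref{lema-cotaH} supplies the lower bound $H_{inf}$ and the Lipschitz constant $\widetilde{H}$ for the nonlocal factor $H$ sitting inside the exponent of $E_1$.

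First I would establish the bounds on $E_1$ in \eqref{cotaE1inf}--\eqref{cotaE1sup}. The upper bound $E_1\le 1$ is immediate since, by positivity of $a$, $D_1$, $N$, $L$, $K$ and $H$, the integrand in the exponent of \eqref{defE1} is nonnegative, so the exponent is nonpositive. For the lower bound I would bound that exponent from above: replace $\frac{N(f_1)}{L(f_1)}\le \frac{N_{1M}}{L_{1m}}v^{-2\mu}$ and $\frac{K(f_1)}{L(f_1)v^{\nu}}\le \frac{K_{1M}}{L_{1m}}v^{-2\mu-\nu}$ via (A1), use $\frac{1}{H}\le \frac{1}{H_{inf}}$ from \eqref{cotaHinf}, and integrate the two monomials (exponents $1-2\mu$ and $-2\mu-\nu$, both $<-1$ since $\mu>2$); exponentiating the resulting bound yields exactly $E_{1inf}$ in \eqref{defE1inf}. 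The core step is the Lipschitz estimate \eqref{difE1}. Using tool (i), $|E_1(\vec f)-E_1(\vec g)|$ is bounded by the absolute difference of the two exponents, which I would split as the $N/L$ contribution plus the $(D_1/H)(K/L)$ contribution. For the first, the quotient identity gives $\left|\frac{N(f_1)}{L(f_1)}-\frac{N(g_1)}{L(g_1)}\right|\le \frac{\widetilde N_1}{L_{1m}}v^{-\mu}\|\vec f-\vec g\|+\frac{N_{1M}\widetilde L_1}{L_{1m}^2}v^{-3\mu}\|\vec f-\vec g\|$, from (A3) and (A1); multiplying by $2av$ and integrating the resulting $v^{1-\mu}$ and $v^{1-3\mu}$ produces the first bracket of \eqref{E1raya}, and it is precisely here that $\mu>2$ is indispensable to keep $\int v^{1-\mu}\,dv$ finite (yielding the $\frac{1}{\mu-2}$ factor). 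For the second contribution I would further decompose $\frac{1}{H(\vec f)}\frac{K(f_1)}{L(f_1)}-\frac{1}{H(\vec g)}\frac{K(g_1)}{L(g_1)}$ into a part where only $K/L$ varies (controlled by (A1), (A3)) and a part where only $H$ varies (controlled by \eqref{difH} and \eqref{cotaHinf}); dividing by $v^{\nu}$ and integrating the monomials $v^{-2\mu-\nu}$ and $v^{-3\mu-\nu}$ matches the $D_1(\cdots)$ block of \eqref{E1raya}.

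This Lipschitz bound for $E_1$ is the main obstacle of the whole lemma, both because the exponent depends on $\vec f$ nonlocally through $H$ and because of the nested quotient bookkeeping needed to reproduce each explicit constant; once it is in hand, the remaining items are variations on the same scheme. For $\Phi_1$ in \eqref{Resta Phi 1}, I would write $\frac{E_1(\vec f)}{L(f_1)}-\frac{E_1(\vec g)}{L(g_1)}=\frac{E_1(\vec f)-E_1(\vec g)}{L(f_1)}+E_1(\vec g)\frac{L(g_1)-L(f_1)}{L(f_1)L(g_1)}$, insert \eqref{difE1}, \eqref{cotaE1sup} and (A1), (A3), and integrate $v^{-\mu-\nu}$ and $v^{-2\mu-\nu}$ to recover $\widetilde\Phi_1$. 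For $H_1$, the bounds \eqref{cotaH1inf}--\eqref{cotaH1sup} come from $E_1\le 1$ (respectively $E_1\ge E_{1inf}$) together with (A1), while the Lipschitz estimate \eqref{Resta H1} splits $\frac{K(f_1)}{E_1(\vec f)}-\frac{K(g_1)}{E_1(\vec g)}$ by the quotient identity and feeds in \eqref{difE1}.

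Finally, for $G_1$ I would exploit the product identity $\psi(\vec f)H_1(\vec f)-\psi(\vec g)H_1(\vec g)=H_1(\vec f)\big(\psi(\vec f)-\psi(\vec g)\big)+\psi(\vec g)\big(H_1(\vec f)-H_1(\vec g)\big)$, where $\psi=\frac{E_1}{L\,v^{\nu}}$ is the integrand of $\Phi_1$. Bounding $H_1(\vec f)\le H_{1sup}$ and recognizing that $\int_{s_0}^{\eta}|\psi(\vec f)-\psi(\vec g)|\,dv$ is already controlled by $\widetilde\Phi_1\|\vec f-\vec g\|$ (this was established pointwise in the $\Phi_1$ step) yields the first term of \eqref{G 1 raya}, while bounding $\psi(\vec g)\le \frac{1}{L_{1m}}v^{-\mu-\nu}$ and using \eqref{Resta H1} yields the second. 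The upper bound \eqref{cotaG1sup} follows by inserting $H_1\le H_{1sup}$ and $E_1\le 1$; the lower bound \eqref{cotaG1inf} requires the substitution $w(v)=s_0^{-(\mu+\nu-1)}-v^{-(\mu+\nu-1)}$, so that $\int_{s_0}^{\eta}v^{-\mu-\nu}\,w(v)\,dv$ integrates to the square appearing in \eqref{G1inf}. The only genuinely delicate bookkeeping, beyond the $E_1$ step, is tracking exactly which of the inf/sup/Lipschitz constants from the earlier parts enters each monomial so that the stated closed forms are reproduced verbatim.
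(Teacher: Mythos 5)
Your proposal follows essentially the same route as the paper's proof: the same cascade $E_1\to\Phi_1\to H_1\to G_1$, the same use of $|e^{-x}-e^{-y}|\le|x-y|$ together with quotient/product splittings controlled by (A1)--(A4) and Lemma~\ref{lema-cotaH}, the same monomial integral bounds (with $\mu>2$ entering exactly where you say it does), and the same substitution producing the squared factor in \eqref{G1inf}. The argument is correct and matches the paper's proof in all essentials.
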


\begin{proof}
    From the definition of $E_1$ given by \eqref{defE1}, assumptions \eqref{cotaLf1}-\eqref{cotaKf1} and inequality \eqref{cotaHinf} we obtain that
    $$\begin{array}{l}
    \bint_{s_0}^{\eta} 2a v \tfrac{N(f_1(v))}{L(f_1(v)}+\tfrac{D_1 }{H(r_0,s_0,f_1,f_2)} \tfrac{K(f_1(v))}{L(f_1(v)) v^{\nu}} dv\\ \\
    \leq\bint_{s_0}^{\eta} 2a  \tfrac{N_{1M} }{L_{1m}} \tfrac{1}{v^{2\mu-1}}+\tfrac{D_1 K_{1M} }{H_{inf}(r_0,s_0) L_{1m}} \tfrac{1}{v^{2\mu+\nu}} dv \\ \\
 \leq  2a  \tfrac{N_{1M} }{L_{1m} (2\mu-2)} \tfrac{1}{ s_0^{2\mu-2}}+\tfrac{D_1 K_{1M} }{H_{inf}(r_0,s_0) L_{1m}(2\mu+\nu-1)} \tfrac{1}{s_0^{2\mu+\nu-1}}. \\ \\
    \end{array}
    $$
As a consequence it results that
   $ E_1(\eta,s_0,f_1,f_2)\geq  E_{1inf}(r_0,s_0)$ with $E_{1inf}(r_0,s_0)$ given by \eqref{defE1inf}.

   In addition, as $E_1$ is a negative exponential function it follows that $E_1(\eta,s_0,f_1,f_2) \leq 1$.

   Let us analyse the difference of $E_1$. From inequality $|\exp(-x)-\exp(-y)|\leq |x-y|$  we have
   \begin{equation}\label{demo:difE1}
       \begin{array}{l}
   \left| E_1(\eta,s_0,f_1,f_2)-E_1(\eta,s_0,g_1,g_2)\right| 
   \leq \bint_{s_0}^{\eta} 2av \left| \tfrac{N(f_1(v))}{L(f_1(v))}-\tfrac{N(g_1(v))}{L(g_1(v))}\right| dv\\ \\
   + D_1 \bint_{s_0}^{\eta}  \left| \tfrac{K(f_1(v))}{H(r_0,s_0,f_1,f_2)L(f_1(v)) v^{\nu}}-\tfrac{K(g_1(v))}{H(r_0,s_0,g_1,g_2)L(g_1(v)) v^{\nu}}\right| dv.
   \end{array}
   \end{equation}
On one hand,
\begin{equation}\label{demo:difE1-aux1}
    \begin{array}{l}
     \bint_{s_0}^{\eta} 2av \left| \tfrac{N(f_1(v))}{L(f_1(v))}-\tfrac{N(g_1(v))}{L(g_1(v))}\right| dv \\ \\
     \leq 2a  \bint_{s_0}^{\eta}  \left\lvert v \tfrac{N(f_1(v))L(g_1(v))-N(g_1(v))L(g_1(v))+N(f_1(v))L(g_1(v))-N(g_1(v))L(f_1(v)) }{L(f_1(v))L(g_1(v)) }\right\rvert   dv\\ \\
     \leq 2a\left[ \bint_{s_0}^{\eta}  \tfrac{v \wN_1 ||\vec{f}-\vec{g}||}{L_{1m} v^{\mu}}dv+\bint_{s_0}^{\eta}  \tfrac{ N_{1M} \wL_1 v^{1-\mu}  ||\vec{f}-\vec{g}||}{L_{1m}^2 v^{2\mu}}dv \right]\\ \\
     \leq 2a \left[ \tfrac{ \wN_1 }{L_{1m} (\mu-2)}\tfrac{1}{s_0^{\mu-2}}+\tfrac{ N_{1M} \wL_1}{L_{1m}^2 (3\mu-2)} \tfrac{1}{s_0^{3\mu-2}}  \right]  ||\vec{f}-\vec{g}||
    \end{array}
\end{equation}
   On the other hand, 
   \begin{equation}\label{demo:difE1-aux1}
    \begin{array}{l}
    D_1 \bint_{s_0}^{\eta}  \left| \tfrac{K(f_1(v))}{H(r_0,s_0,f_1,f_2)L(f_1(v)) v^{\nu}}-\tfrac{K(g_1(v))}{H(r_0,s_0,g_1,g_2)L(g_1(v)) v^{\nu}}\right| dv\\ \\
    \leq   D_1\Bigg( \bint_{s_0}^{\eta}  \left| \tfrac{K(f_1(v))}{H(r_0,s_0,f_1,f_2)L(f_1(v)) v^{\nu}}-\tfrac{K(g_1(v))}{H(r_0,s_0,f_1,f_2)L(f_1(v)) v^{\nu}}\right| dv \\ \\
    +\bint_{s_0}^{\eta} \left|\tfrac{K(g_1(v))}{H(r_0,s_0,f_1,f_2)L(f_1(v)) v^{\nu}}    -\tfrac{K(g_1(v))}{H(r_0,s_0,g_1,g_2)L(g_1(v)) v^{\nu}}\right| dv\Bigg)\\ \\
    \leq   D_1\Bigg( \bint_{s_0}^{\eta}  \tfrac{|K(f_1(v))-K(g_1(v))|}{H(r_0,s_0,f_1,f_2)L(f_1(v)) v^{\nu}} dv
    \\ \\
    +\bint_{s_0}^{\eta} |K(g_1(v))|\left|\tfrac{1}{H(r_0,s_0,f_1,f_2)L(f_1(v)) v^{\nu}}    -\tfrac{1}{H(r_0,s_0,g_1,g_2)L(g_1(v)) v^{\nu}}\right| dv\Bigg).\\ \\
    \end{array}
\end{equation}
From assumptions \eqref{cotaLf1}, \eqref{cotaKf1}, \eqref{LipschitzK1} and inequalities \eqref{cotaHinf}, \eqref{difH} we get that
\begin{equation}\label{demo:difE1-aux3}
 \bint_{s_0}^{\eta}  \tfrac{|K(f_1(v))-K(g_1(v))|}{H(r_0,s_0,f_1,f_2)L(f_1(v)) v^{\nu}} dv \leq  \tfrac{\wK_1}{H_{inf}(r_0,s_0) L_{1m} (2\mu+\nu-1)}\tfrac{1}{s_0^{2\mu+\nu-1}} ||\vec{f}-\vec{g}||
   \end{equation}
   and
\begin{equation}\label{demo:difE1-aux4}
\begin{array}{l}
  \bint_{s_0}^{\eta} |K(g_1(v))|\left|\tfrac{1}{H(r_0,s_0,f_1,f_2)L(f_1(v)) v^{\nu}}    -\tfrac{1}{H(r_0,s_0,g_1,g_2)L(g_1(v)) v^{\nu}}\right| dv \\ \\
  \leq  K_{1M} \Bigg( \bint_{s_0}^{\eta} \tfrac{|H(r_0,s_0,g_1,g_2)-H(r_0,s_0,f_1,f_2)|}{H(r_0,s_0,f_1,f_2)L(f_1(v))H(r_0,s_0,g_1,g_2) } \tfrac{dv}{v^{\mu+\nu}} \\ \\
  +\bint_{s_0}^{\eta} \tfrac{|L(g_1(v))-L(f_1(v)) |}{L(f_1(v))H(r_0,s_0,g_1,g_2)L(g_1(v)) } \tfrac{dv}{v^{\mu+\nu}}\Bigg)\\ \\
  \leq \tfrac{K_{1M}}{H_{inf}(r_0,s_0) L_{1m}} \Bigg( \tfrac{\widetilde{H}(r_0,s_0)}{H_{inf}(r_0,s_0) (2\mu+\nu-1)}\tfrac{1}{s_0^{2\mu+\nu-1}}+ \tfrac{\widetilde{L}_1}{ L_{1m} (3\mu+\nu-1)}\tfrac{1}{s_0^{3\mu+\nu-1}} \Bigg) ||\vec{f}-\vec{g}||.
  \end{array}
\end{equation}
Then inequalities \eqref{demo:difE1}-\eqref{demo:difE1-aux4} yields to
$$\left| E_1(\eta,s_0,f_1,f_2)-E_1(\eta,s_0,g_1,g_2)\right| \leq \widetilde{E}_1 (r_0,s_0) ||\vec{f}-\vec{g}||$$
with $\widetilde{E}_1$ given by \eqref{E1raya}.

From the definition of $\Phi_1$ given by \eqref{phi1} we have that
$$\begin{array}{ll}
|\Phi_1(\eta,s_0,f_1,f_2)-\Phi_1(\eta,s_0,g_1,g_2)|  \leq 
\bint_{s_0}^{\eta} \left| \tfrac{E_1(v,s_0,f_1,f_2)}{L(f_1(v))}-\tfrac{E_1(v,s_0,g_1,g_2)}{L(g_1(v))}\right| \tfrac{dv}{v^{\nu}} \\ \\
\leq \bint_{s_0}^{\eta}   \tfrac{|E_1(v,s_0,f_1,f_2)-E_1(v,s_0,g_1,g_2)|}{L(f_1(v))} \tfrac{dv}{v^{\nu}}
+  \bint_{s_0}^{\eta}  \tfrac{E_1(v,s_0,g_1,g_2) |L(f_1(v))-L(g_1(v))|}{L(f_1(v))L(g_1(v))}\tfrac{dv}{v^{\nu}} \\ \\
\leq \left(\tfrac{\wE_1(r_0,s_0) }{L_{1m}} \bint_{s_0}^{\eta} \tfrac{1}{v^{\mu+\nu}} dv + \tfrac{\wL_1}{L_{1m}^2} \bint_{s_0}^{\eta} \tfrac{1}{v^{2\mu+\nu}} dv\right) ||\vec{f}-\vec{g}|| \leq \widetilde{\Phi}_1(r_0,s_0)  ||\vec{f}-\vec{g}||
\end{array}
$$
where  $\widetilde{\Phi}_1(r_0,s_0)$ is given by \eqref{Phi1raya}.

Taking into account the definition of $H_1$ given by \eqref{defH1} we esasily obtain that
$$H_1(\eta,s_0,f_1,f_2) \geq K_{1m} \bint_{s_0}^{\eta} \tfrac{1}{v^{\mu+\nu}} dv\geq 
\tfrac{K_{1m}}{(\mu+\nu-1)} \left(\tfrac{1}{s_0^{\mu+\nu-1}}-\tfrac{1}{\eta^{\mu+\nu-1}}\right),$$

$$|H_1(\eta,s_0,f_1,f_2)| \leq \tfrac{K_{1M}}{E_{1inf}(r_0,s_0)} \bint_{s_0}^{\eta} \tfrac{1}{v^{\mu+\nu}} dv\leq  \tfrac{K_{1M}}{E_{1inf}(r_0,s_0)} \tfrac{1}{(\mu+\nu-1)} \tfrac{1}{s_0^{\mu+\nu-1}},$$
then \eqref{cotaH1inf} and  \eqref{cotaH1sup} hold.
In addition
$$\begin{array}{ll}
|H_1(\eta,s_0,f_1,f_2)-H_1(\eta,s_0,g_1,g_2)|  \leq 
\bint_{s_0}^{\eta} \left| \tfrac{K(f_1(v))}{E_1(v,s_0,f_1,f_2)}-\tfrac{K(g_1(v))}{E_1(v,s_0,g_1,g_2)}\right| \tfrac{dv}{v^{\nu}} \\ \\
\leq \bint_{s_0}^{\eta}  \tfrac{|K(f_1(v))-K(g_1(v))|}{E_1(v,s_0,f_1,f_2)}\tfrac{dv}{v^{\nu}} +\bint_{s_0}^{\eta}  \tfrac{K(g_1(v))|E_1(v,s_0,f_1,f_2)-E_1(v,s_0,g_1,g_2)| }{E_1(v,s_0,f_1,f_2)E_1(v,s_0,g_1,g_2)}\tfrac{dv}{v^{\nu}} \\ \\
\leq \tfrac{1}{E_{1inf}(r_0,s_0)} \left( \wK_1+ \tfrac{K_{1M} \wE_1 (r_0,s_0)}{E_{1inf}(r_0,s_0)} \right) \bint_{s_0}^{\eta} \tfrac{1}{v^{\mu+\nu}} dv\; \;||\vec{f}-\vec{g}|| \leq \widetilde{H}_1(r_0,s_0)||\vec{f}-\vec{g}||
\end{array}
$$
where $\widetilde{H}_1$ is given by \eqref{H1raya}.

From the definition of $G_1$ it follows that
$$\begin{array}{ll} G_1(\eta,s_0,f_1,f_2)|\geq \tfrac{E_{1inf}(r_0,s_0)}{L_{1M}} \bint _{s_0}^{\eta} \tfrac{H_{1inf}(v,s_0)}{v^{\mu+\nu}}dv\\ 
\geq \tfrac{K_{1m}E_{1inf}(r_0,s_0)}{L_{1M}(\mu+\nu-1) } \bint_{s_0}^{\eta} \tfrac{1}{v^{\mu+\nu}} \left(\tfrac{1}{s_0^{\mu+\nu-1}}-\tfrac{1}{v^{\mu+\nu-1}} \right) dv\geq G_{1inf}(\eta,r_0,s_0),\\ 
\end{array}$$
where $ G_{1inf}$ is given by \eqref{G1inf} and
$$\begin{array}{ll}|G_1(\eta,s_0,f_1,f_2)|\leq \bint\limits_{s_0}^{\eta} \left|\tfrac{E_1(v,s_0,f_1,f_2)H_1(v,s_0,f_1,f_2)}{L(f_1(v))} \right|\tfrac{dv}{v^{\nu}} \\ \\
\leq \tfrac{H_{1sup}(r_0,s_0)}{L_{1m}}  \bint_{s_0}^{\eta}  \tfrac{dv}{v^{\mu+\nu}} \leq G_{1sup}(r_0,s_0) ,
\end{array}$$
where $ G_{1sup}$ is given by \eqref{G1sup}.
Moreover
$$\begin{array}{ll}
     |G_1(\eta,s_0,f_1,f_2)-G_1(\eta,s_0,g_1,g_2)| \\ \\
  \leq  \bint_{s_0}^{\eta}  |H_1(v,s_0,f_1,f_2) |\left|\tfrac{E_1(v,s_0,g_1,g_2)}{L(g_1(v))}-\tfrac{E_1(v,s_0,f_1,f_2)}{L(f_1(v))} \right| \tfrac{dv}{v^{\nu}}\\ \\
  + \bint_{s_0}^{\eta}  \tfrac{E_1(v,s_0,g_1,g_2) |H_1(v,s_0,f_1,f_2)-H_1(v,s_0,g_1,g_2)|}{L(g_1(v))} \tfrac{dv}{v^{\nu}} \\ \\
  \leq \widetilde{G}_1(r_0,s_0) ||\vec{f}-\vec{g}||,
\end{array}$$
with $\widetilde{G}_1 $ defined by \eqref{G 1 raya}.
    
\end{proof}
\begin{lemma}\label{lema-cotaFase2} For every $\vec{f}=(f_1,f_2)$,  $\vec{g}=(g_1,g_2)\in\mathcal{K}$, the following inequalities hold: 
\begin{enumerate}

     \item 
    \begin{equation}\label{cotaE2inf}
        E_2(\eta,r_0,f_1,f_2)\geq E_{2 inf}(r_0,s_0),
    \end{equation}
    \begin{equation}\label{cotaE2sup}
        E_2(\eta,r_0,f_1,f_2)\leq 1,
    \end{equation}
    \begin{equation}\label{difE2}
        |E_2(\eta,r_0,f_1,f_2)-E_2(\eta,r_0,g_1,g_2)|\leq \wE_2(r_0,s_0) ||\vec{f}-\vec{g}||,
    \end{equation}
where
 \begin{equation}\label{E2inf}
 E_{2inf}(r_0,s_0):=\exp\left(-\left[  a  \tfrac{N_{2M} }{L_{2m} (\mu-1)} \tfrac{1}{ r_0^{2\mu-2}}+\tfrac{D_2 K_{2M} }{H_{inf}(r_0,s_0) L_{2m}(2\mu+\nu-1)} \tfrac{1}{r_0^{2\mu+\nu-1}}\right]\right),
 \end{equation}
 \begin{equation}\label{E2raya}
 \begin{array}{l}
 \widetilde{E}_2(r_0,s_0):= 2a \left[ \tfrac{ \wN_2 }{L_{2m} (\mu-2)}\tfrac{1}{r_0^{\mu-2}}+\tfrac{ N_{2M} \wL_2}{L_{2m}^2 (3\mu-2)} \tfrac{1}{r_0^{3\mu-2}}  \right]\\ \\
 +D_2\Big(  \tfrac{\wK_2}{H_{inf}(r_0,s_0) L_{2m} (2\mu+\nu-1)}\tfrac{1}{r_0^{2\mu+\nu-1}}\\ \\+\tfrac{K_{2M}}{H_{inf}(r_0,s_0) L_{2m}} \Big( \tfrac{\widetilde{H}(r_0)}{H_{inf}(r_0,s_0) (2\mu+\nu-1)}\tfrac{1}{r_0^{2\mu+\nu-1}}+ \tfrac{\widetilde{L}_2}{ L_{2m} (3\mu+\nu-1)}\tfrac{1}{r_0^{3\mu+\nu-1}} \Big)\Big).
 \end{array}
 \end{equation}

\item 
\begin{equation}\label{cotaPhi2inf}
         \Phi_2(\eta,r_0,f_1,f_2)\geq \Phi_{2 inf}(\eta,r_0,s_0),
    \end{equation}
    \begin{equation}\label{cotaPhi2sup}
      \Phi_2(\eta,r_0,f_1,f_2)\leq \Phi_{2sup}(r_0),
    \end{equation}
    \begin{equation}\label{Resta Phi 2}
|\Phi_2(\eta,r_0,f_1,f_2)-\Phi_2(\eta,r_0,g_1,g_2)|\leq \widetilde{\Phi}_2(r_0,s_0)||\vec{f}-\vec{g}||,
\end{equation}
  where  \begin{equation}\label{Phi2inf}
\Phi_{2 inf}(\eta,r_0,s_0):= \tfrac{E_{2inf}(r_0,s_0)}{L_{2M}} \tfrac{1}{(\mu+\nu-1)} \left(\tfrac{1}{r_0^{\mu+\nu-1}}-\tfrac{1}{\eta^{\mu+\nu-1}}\right),
  \end{equation}  
   \begin{equation}\label{Phi2sup}
\Phi_{2 sup}(r_0):= \tfrac{1}{L_{2m}} \tfrac{1}{(\mu+\nu-1)}\tfrac{1}{r_0^{\mu+\nu-1}},
  \end{equation}  
 \begin{equation}\label{Phi2raya}
 \widetilde{\Phi}_2(r_0,s_0):=\tfrac{\wE_2(r_0,s_0)}{L_{2m}} \tfrac{1}{(\mu+\nu-1)}\tfrac{1}{r_0^{\mu+\nu-1}}+ \tfrac{\wL_2}{L_{2m}^2}  \tfrac{1}{(2\mu+\nu-1)}\tfrac{1}{r_0^{2\mu+\nu-1}}.
 \end{equation}

\item
 \begin{equation}\label{cotaH2inf}
      H_2(\eta,r_0,f_1,f_2)\leq H_{2inf}(\eta,r_0,s_0),
    \end{equation}
  \begin{equation}\label{cotaH2sup}
      H_2(\eta,r_0,f_1,f_2)\leq H_{2sup}(r_0,s_0),
    \end{equation}
    \begin{equation}\label{Resta H2}
|H_2(\eta,r_0,f_1,f_2)-H_2(\eta,r_0,g_1,g_2)|\leq \widetilde{H}_2(r_0,s_0)||\vec{f}-\vec{g}||,
\end{equation}
 where  
 \begin{equation}\label{H2inf}
H_{2 inf}(\eta,r_0):=\tfrac{K_{2m}}{(\mu+\nu-1)} \left( \tfrac{1}{r_0^{\mu+\nu-1}}-\tfrac{1}{\eta^{\mu+\nu-1}}\right),
  \end{equation}  
    \begin{equation}\label{H2sup}
H_{2 sup}(r_0,s_0):= \tfrac{K_{2M}}{E_{2inf}(r_0,s_0)} \tfrac{1}{(\mu+\nu-1)} \tfrac{1}{r_0^{\mu+\nu-1}},
\end{equation}
 \begin{equation}\label{H2raya}
 \widetilde{H}_2(r_0,s_0):= \left( \wK_2+ \tfrac{K_{2M} \wE_2(r_0,s_0)}{E_{2inf}(r_0,s_0)}\right) \tfrac{1}{E_{2inf}(r_0,s_0) (\mu+\nu-1)} \tfrac{1}{r_0^{\mu+\nu-1}}.
 \end{equation}

\item 
\begin{equation}\label{cotaG2inf}
      G_2(\eta,r_0,f_1,f_2)\leq G_{2inf}(\eta,r_0,s_0),
    \end{equation}
 \begin{equation}\label{cotaG2sup}
      G_2(\eta,r_0,f_1,f_2)\leq G_{2sup}(r_0,s_0),
    \end{equation}
    \begin{equation}\label{Resta G2}
|G_2(\eta,r_0,f_1,f_2)-G_2(\eta,r_0,g_1,g_2)|\leq \widetilde{G}_2(r_0,s_0)||\vec{f}-\vec{g}||,
\end{equation}
 where 
 \begin{equation}\label{G2inf}
G_{2 inf}(\eta,r_0,s_0):= \tfrac{K_{2m} E_{2inf}(r_0,s_0) }{2 L_{2M} (\mu+\nu-1)^2} \left(\tfrac{1}{r_0^{\mu+\nu-1}}-\tfrac{1}{\eta^{\mu+\nu-1}} \right)^2,
\end{equation}
    \begin{equation}\label{G2sup}
G_{2 sup}(r_0,s_0):= \tfrac{H_{2sup}(r_0,s_0)}{L_{2m}} \tfrac{1}{(\mu+\nu-1)}\tfrac{1}{r_0^{\mu+\nu-1}},
\end{equation}
 \begin{equation}\label{G2raya}
 \widetilde{G}_2(r_0,s_0):= H_{2sup}(r_0,s_0) \widetilde{\Phi}_2(r_0,s_0)+ \tfrac{\widetilde{H}_2(r_0,s_0)}{L_{2m}}  \tfrac{1}{(\mu+\nu-1)}\tfrac{1}{r_0^{\mu+\nu-1}}.
 \end{equation}

\end{enumerate}
\end{lemma}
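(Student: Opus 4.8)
The plan is to mirror, essentially line by line, the proof of Lemma \ref{lema-cotaFase1}, since the quantities $E_2,\Phi_2,H_2,G_2$ are the exact second-phase analogues of $E_1,\Phi_1,H_1,G_1$: one replaces the first-phase bounds (A1),(A3) by their second-phase counterparts (A2),(A4), the base point $s_0$ by $r_0$, and the finite interval $[s_0,r_0]$ by the semi-infinite interval $[r_0,+\infty)$. The only genuinely new issue is that every integral is now improper, so one must check convergence at $+\infty$ and verify that the resulting bounds are uniform in $\eta\ge r_0$.

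First I would establish the bounds on $E_2$. Starting from the definition \eqref{eq48}, I bound the exponent from above using \eqref{cotaLf2}--\eqref{cotaKf2} together with $H\ge H_{inf}$ from \eqref{cotaHinf}; the two resulting integrals $\int_{r_0}^{\eta} v^{1-2\mu}\,dv$ and $\int_{r_0}^{\eta} v^{-2\mu-\nu}\,dv$ converge on $[r_0,+\infty)$ and are dominated by their values there, giving the uniform lower bound $E_{2inf}$ in \eqref{E2inf}; the upper bound $E_2\le 1$ is immediate since $E_2$ is a negative exponential. For the Lipschitz estimate \eqref{difE2} I would use $|e^{-x}-e^{-y}|\le|x-y|$ and split exactly as in \eqref{demo:difE1}, controlling the $N/L$ term and the $K/(HL)$ term separately; the latter is handled by adding and subtracting, as in \eqref{demo:difE1-aux3}--\eqref{demo:difE1-aux4}, invoking \eqref{difH} for the difference of $H$.

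Next, the bounds on $\Phi_2$, $H_2$ and $G_2$ follow the same pattern as parts (2)--(4) of Lemma \ref{lema-cotaFase1}. For $\Phi_2$ I would use $E_2\ge E_{2inf}$ together with $L\le L_{2M}\eta^\mu$ for the lower bound \eqref{cotaPhi2inf}, $E_2\le1$ with $L\ge L_{2m}\eta^\mu$ for the upper bound \eqref{cotaPhi2sup}, and a triangle-inequality split for \eqref{Resta Phi 2}. For $H_2$ and $G_2$ I would proceed identically, substituting the already-established bounds on $E_2$, $\Phi_2$ and $H_2$ where needed. Each supremum over $\eta$ is attained in the limit $\eta\to+\infty$, which is exactly why the stated constants depend only on $r_0$ (and on $s_0$ through $H_{inf}$) and not on $\eta$.

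The main obstacle is purely the convergence of these improper integrals, and this is precisely where hypothesis (A5), $\mu>2$, is indispensable. In the Lipschitz estimate for $E_2$ the leading contribution is $\int_{r_0}^{\infty} v\,\tfrac{N(f_2)}{L(f_2)}\,dv\sim\int_{r_0}^{\infty} v^{1-\mu}\,dv$, which converges if and only if $\mu>2$, producing the factor $\tfrac{1}{(\mu-2)}\tfrac{1}{r_0^{\mu-2}}$ visible in \eqref{E2raya}. By contrast, the mere lower bound $E_{2inf}$ only needs $\int_{r_0}^{\infty} v^{1-2\mu}\,dv$ to converge, which holds already for $\mu>1$; all the remaining integrals ($\int v^{-\mu-\nu}$, $\int v^{-2\mu-\nu}$, $\int v^{1-3\mu}$, $\int v^{-3\mu-\nu}$) likewise converge under the weaker requirement $\mu>1$ and hence cause no additional difficulty. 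Once convergence is secured and the closed forms of the integrals over $[r_0,+\infty)$ are computed, all the constants in \eqref{E2inf}--\eqref{G2raya} emerge exactly as stated, completing the proof.
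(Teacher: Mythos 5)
Your proposal is correct and matches the paper's approach exactly: the paper's own proof of this lemma consists of the single remark that it ``follows analogously to the previous lemma,'' and your argument is precisely that analogy carried out, with the only genuinely new point (convergence of the improper integrals on $[r_0,+\infty)$, in particular that the $\int_{r_0}^{\infty}v^{1-\mu}\,dv$ term in the Lipschitz estimate for $E_2$ is where hypothesis (A5), $\mu>2$, is actually needed) correctly identified.
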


\begin{proof}
    The proof follows  analogously to the previous lemma.
\end{proof}

\begin{lemma}
For every $\vec{f}=(f_1,f_2),  \vec{g}=(g_1,g_2)\in\mathcal{K}$ it follows that
$$||V_1(\vec{f})-V_1(\vec{g}) ||_{C[s_o,r_0]}\leq  \varepsilon_1(r_0,s_0)||\vec{f} -\vec{g}||$$
where
\begin{equation}\label{epsilon1}
    \varepsilon_1(r_0,s_0)=2s_0^{\nu}Q\exp(-s_0^{2})\widetilde{\Phi}_1(r_0,s_0)+2D_1^*\bigg(\tfrac{G_{1sup}(r_0,s_0)2H_{sup}(r_0,s_0)\wH(r_0,s_0) }{H^{4}_{inf}(r_0,s_0)}+\tfrac{\widetilde{G}_1(r_0,s_0)}{H^{2}_{inf}(r_0,s_0)}\bigg)
    \end{equation}
    \end{lemma}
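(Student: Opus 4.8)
The plan is to bound $|V_1(\vec f)(\eta)-V_1(\vec g)(\eta)|$ uniformly in $\eta\in[s_0,r_0]$ and then pass to the supremum. Inspecting the definition \eqref{V1}, I would split $V_1$ into its two structural pieces: the piece weighted by $s_0^{\nu}Q\exp(-s_0^2)$ built from $\Phi_1$, and the piece weighted by $D_1^*$ built from $G_1$ and $H^{-2}$. Each is estimated separately and the results are combined with the triangle inequality.

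For the first piece I would write the difference as
$$\big|[\Phi_1(r_0,s_0,f_1,f_2)-\Phi_1(r_0,s_0,g_1,g_2)]-[\Phi_1(\eta,s_0,f_1,f_2)-\Phi_1(\eta,s_0,g_1,g_2)]\big|$$
and bound it by the triangle inequality through the two differences evaluated at $r_0$ and at $\eta$, each controlled by \eqref{Resta Phi 1}. This produces a factor $2$ and yields the first summand $2s_0^{\nu}Q\exp(-s_0^2)\widetilde\Phi_1(r_0,s_0)\,\|\vec f-\vec g\|$ of \eqref{epsilon1}.

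The main obstacle is the second piece, because both the prefactor $H^{-2}$ and the bracket $G_1(r_0,\cdot)-G_1(\eta,\cdot)$ depend on the argument. Abbreviating $A_f=G_1(r_0,s_0,f_1,f_2)-G_1(\eta,s_0,f_1,f_2)$ and $A_g$ analogously, I would use the standard quotient decomposition
$$\frac{A_f}{H^2(r_0,s_0,f_1,f_2)}-\frac{A_g}{H^2(r_0,s_0,g_1,g_2)}=\frac{A_f\big(H^2(r_0,s_0,g_1,g_2)-H^2(r_0,s_0,f_1,f_2)\big)}{H^2(r_0,s_0,f_1,f_2)\,H^2(r_0,s_0,g_1,g_2)}+\frac{A_f-A_g}{H^2(r_0,s_0,g_1,g_2)}.$$
Then I would estimate the factors one by one: $|A_f|\le 2G_{1sup}(r_0,s_0)$ by \eqref{cotaG1sup}; the difference of squares factors as $(H(\cdot,g)-H(\cdot,f))(H(\cdot,g)+H(\cdot,f))$ and is bounded by $2H_{sup}(r_0,s_0)\,\wH(r_0,s_0)\,\|\vec f-\vec g\|$ via \eqref{difH} and \eqref{cotaHsup}; the two denominators are bounded below by $H_{inf}^4$ and $H_{inf}^2$ respectively using \eqref{cotaHinf}; and $|A_f-A_g|\le 2\widetilde G_1(r_0,s_0)\,\|\vec f-\vec g\|$ by the triangle inequality across the evaluation points together with \eqref{Resta G 1}. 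Multiplying by $D_1^*$ and collecting constants reproduces exactly the second summand $2D_1^*\big(\tfrac{2G_{1sup}H_{sup}\wH}{H_{inf}^4}+\tfrac{\widetilde G_1}{H_{inf}^2}\big)$.

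Finally, since every bound above is independent of $\eta$, taking the supremum over $\eta\in[s_0,r_0]$ is immediate and delivers $\|V_1(\vec f)-V_1(\vec g)\|_{C[s_0,r_0]}\le\varepsilon_1(r_0,s_0)\,\|\vec f-\vec g\|$ with $\varepsilon_1$ as in \eqref{epsilon1}. The only delicate point is the numerical bookkeeping of the factors of $2$: one arises from splitting each difference across the two evaluation points $r_0$ and $\eta$, and one from the crude bound $|A_f|\le 2G_{1sup}$; these must be matched carefully against the precise form of $\varepsilon_1$.
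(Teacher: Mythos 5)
Your proof is correct and follows essentially the same route as the paper: the same split of $V_1$ into the $\Phi_1$ piece and the $G_1/H^2$ piece, the same use of the Lipschitz estimates \eqref{Resta Phi 1}, \eqref{Resta G 1}, \eqref{difH} together with the bounds \eqref{cotaG1sup}, \eqref{cotaHsup}, \eqref{cotaHinf}, and the same quotient decomposition for the $G_1/H^2$ term. The only (immaterial) difference is that you keep the bracket $G_1(r_0,\cdot)-G_1(\eta,\cdot)$ together and absorb the factors of $2$ inside the quotient estimate, whereas the paper first bounds the quotient difference at a single evaluation point and then sums over the two points $r_0$ and $\eta$; both orderings yield exactly the constant $\varepsilon_1(r_0,s_0)$ in \eqref{epsilon1}.
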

\begin{proof}
Taking into account that
\begin{equation}\label{Diferencia cociente G1 H}
\begin{array}{l}
\bigg|\frac{G_1(\eta,s_0,f_1,f_2)}{H^{2}(r_0,s_0,f_1,f_2)}-\frac{G_1(\eta,s_0,g_1,g_2)}{H^{2}(r_0,s_0,g_1,g_2)}\bigg|\leq

\frac{|G_1(\eta,s_0,f_1,f_2)| |H^{2}(r_0,s_0,f_1,f_2)-H^{2}(r_0,s_0,g_1,g_2)|}{H^{2}(r_0,s_0,f_1,f_2)H^{2}(g_1,g_2)}\\[0.40cm]+\frac{|G_1(\eta,s_0,f_1,f_2)-G_1(\eta,s_0,g_1,g_2)| }{H^{2}(r_0,s_0,g_1,g_2)}\leq \bigg[\frac{G_{1sup}(r_0,s_0)2H_{sup}(r_0,s_0)\wH(r_0,s_0) }{H^{4}_{inf}(r_0,s_0)}+\frac{\widetilde{G}_1(r_0,s_0)}{H^{2}_{inf}(r_0,s_0}\bigg]||\vec{f}-\vec{g}||\\

\end{array}
\end{equation}
then, for each $\eta\in [s_0,r_0]$ it follows that
\begin{equation}\label{Resta V1}
    \begin{array}{l}
    |V_1(\vec{f})(\eta)-V_1(\vec{g})(\eta)|\leq \\[0.25cm]
   
    \leq s_0^{\nu}Q\exp(-s_0^{2}) \big[|\Phi_1(r_0,s_0,f_1,f_2)-\Phi_1(r_0,s_0,g_1,g_2)|\\ \\
    + |\Phi_1(\eta,s_0,f_1,f_2)-\Phi_1(\eta,s_0,g_1,g_2)|\big]
  \\ \\
       + \big|\frac{D_1^*G_1(r_0,s_0,f_1,f_2)}{H^{2}(r_0,s_0,f_1,f_2)}-\frac{D_1^*G_1(r_0,s_0,g_1,g_2)}{H^{2}(r_0,s_0,g_1,g_2)}\bigg|+ \bigg|\frac{D_1^*G_1(\eta,s_0,f_1,f_2)}{H^{2}(r_0,s_0,f_1,f_2)}-\frac{D_1^*G_1(\eta,s_0,g_1,g_2)}{H^{2}(r_0,s_0,g_1,g_2)}\big|, \\ \\
       \leq \Big[2s_0^{\nu}Q\exp(-s_0^{2})\widetilde{\Phi}_1(r_0,s_0)+2D_1^*\big(\frac{G_{1sup}(r_0,s_0)2H_{sup}(r_0,s_0)\wH(r_0,s_0) }{H^{4}_{inf}(r_0,s_0)}+\frac{\widetilde{G}_1(r_0,s_0)}{H^{2}_{inf}(r_0,s_0)}\big)\Big]||\vec{f}-\vec{g}||\\ \\
       =\varepsilon_1(r_0,s_0)||\vec{f}-\vec{g}||.
\end{array}
\end{equation}

\end{proof}

\begin{lemma}
For every $\vec{f}=(f_1,f_2),  \vec{g}=(g_1,g_2)\in\mathcal{K}$ it follows that
$$||V_2(\vec{f})-V_2(\vec{g}) ||_{C_b[r_0,+\infty)}\leq  \varepsilon_2(r_0,s_0)||\vec{f} -\vec{g}||$$
where
\begin{equation}\label{epsilon2}
\varepsilon_2(r_0,s_0)=\varepsilon_{21}(r_0,s_0)+\varepsilon_{22}(r_0,s_0)+\varepsilon_{23}(r_0,s_0)
\end{equation}
with
$$\begin{array}{ll}
\varepsilon_{21}(r_0,s_0)=\frac{2 \widetilde{\Phi}_2(r_0,s_0)}{\Phi_{2inf}(+\infty,r_0,s_0)} , \\ \\
\varepsilon_{22}(r_0,s_0)= \frac{\widetilde{G}_2(r_0,s_0)}{H^2_{inf}(r_0,s_0)}+ \frac{2 G_{2sup}(r_0,s_0) H_{sup}(r_0,s_0) \wH(r_0,s_0)}{H^4_{inf}(r_0,s_0)} ,\\ \\
\varepsilon_{23}(r_0,s_0)=\frac{\Phi_{2sup}(r_0,s_0)}{\Phi_{2inf}(+\infty,r_0,s_0)} \varepsilon_{22}(r_0,s_0)+ \frac{G_{2sup}(r_0,s_0)}{H^2_{inf}(r_0,s_0)}\varepsilon_{21}(r_0,s_0).
\end{array}$$
\end{lemma}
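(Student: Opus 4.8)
The plan is to follow the scheme just used for $V_1$, now exploiting the particular structure of $V_2$ together with the uniform bounds and Lipschitz estimates gathered in Lemma \ref{lema-cotaH} and Lemma \ref{lema-cotaFase2}. First I would distribute the bracket and write
\[
V_2(\vec{f})(\eta)=R(\vec{f})\,B(\vec{f})(\eta)-B(\vec{f})(\eta)-C(\vec{f})(\eta),
\]
where
\[
B(\vec{f})(\eta)=\tfrac{\Phi_2(\eta,r_0,f_1,f_2)}{\Phi_2(+\infty,r_0,f_1,f_2)},\quad R(\vec{f})=\tfrac{D_2^*\,G_2(+\infty,r_0,f_1,f_2)}{H^2(r_0,s_0,f_1,f_2)},\quad C(\vec{f})(\eta)=\tfrac{D_2^*\,G_2(\eta,r_0,f_1,f_2)}{H^2(r_0,s_0,f_1,f_2)}.
\]
Subtracting $V_2(\vec{g})$ and using the triangle inequality splits the problem into three blocks whose Lipschitz constants are, by design, exactly $\varepsilon_{23}$, $\varepsilon_{21}$ and $\varepsilon_{22}$. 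The $D_2^*$ factors are absorbed into the constants throughout, consistently with the definitions of the $\varepsilon_{2j}$.

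For the isolated term $B(\vec{f})-B(\vec{g})$, I would write the difference of the two quotients over the common denominator $\Phi_2(+\infty,f)\Phi_2(+\infty,g)$ and add and subtract $\Phi_2(\eta,g)\Phi_2(+\infty,g)$ in the numerator. Bounding the two resulting pieces with the lower bound \eqref{cotaPhi2inf} for $\Phi_2(+\infty,\cdot)$, the trivial bound $\Phi_2(\eta,g)/\Phi_2(+\infty,g)\le 1$, and the Lipschitz estimate \eqref{Resta Phi 2} produces the factor $2\widetilde{\Phi}_2/\Phi_{2inf}(+\infty,r_0,s_0)=\varepsilon_{21}$. For $C(\vec{f})-C(\vec{g})$ I would split $G_2/H^2$ by adding and subtracting $G_2(\eta,g)H^2(g)$: the numerator difference is handled by \eqref{Resta G2}, while the denominator difference is controlled through the factorization $|H^2(f)-H^2(g)|\le 2H_{sup}\wH\,\|\vec{f}-\vec{g}\|$, which follows from \eqref{cotaHsup} and \eqref{difH}, together with the lower bound \eqref{cotaHinf} and the uniform bound \eqref{cotaG2sup} on $|G_2|$; this yields $\varepsilon_{22}$.

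The product block $R(\vec{f})B(\vec{f})-R(\vec{g})B(\vec{g})$ is treated by the standard add-and-subtract device
\[
R(\vec{f})B(\vec{f})-R(\vec{g})B(\vec{g})=R(\vec{f})\big[B(\vec{f})-B(\vec{g})\big]+\big[R(\vec{f})-R(\vec{g})\big]B(\vec{g}).
\]
In the first summand I bound $|R(\vec{f})|\le G_{2sup}/H^2_{inf}$ (from \eqref{cotaG2sup} and \eqref{cotaHinf}) and apply the $\varepsilon_{21}$-estimate; in the second I use $|B(\vec{g})|\le \Phi_{2sup}/\Phi_{2inf}(+\infty,r_0,s_0)$ (from \eqref{cotaPhi2sup}, \eqref{cotaPhi2inf}) and the $\varepsilon_{22}$-type estimate for $R$, i.e.\ the same difference-of-quotients bound for $G_2/H^2$ evaluated at $\eta=+\infty$. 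These two contributions are precisely the two summands of $\varepsilon_{23}$. Adding the three blocks and taking the supremum over $\eta\ge r_0$ gives $\|V_2(\vec{f})-V_2(\vec{g})\|_{C_b[r_0,+\infty)}\le(\varepsilon_{21}+\varepsilon_{22}+\varepsilon_{23})\|\vec{f}-\vec{g}\|=\varepsilon_2\,\|\vec{f}-\vec{g}\|$.

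The main obstacle is to keep every estimate uniform in $\eta$ over the unbounded interval $[r_0,+\infty)$, since the conclusion is stated in the $C_b$-norm. The decisive facts are that the denominators $\Phi_2(+\infty,\cdot)$ and $H(r_0,s_0,\cdot)$ admit the $\eta$-independent lower bounds $\Phi_{2inf}(+\infty,r_0,s_0)$ and $H_{inf}(r_0,s_0)$, that the normalizing ratio $\Phi_2(\eta,\cdot)/\Phi_2(+\infty,\cdot)$ stays bounded by $1$ for all $\eta$, and that the improper integrals defining $\Phi_2$, $G_2$ and $H$ converge and carry finite Lipschitz constants; all of this hinges on hypothesis (A5), $\mu>2$. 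Once these $\eta$-uniform bounds from Lemma \ref{lema-cotaH} and Lemma \ref{lema-cotaFase2} are in hand, the remainder is the routine bookkeeping of the difference-of-quotients estimates described above.
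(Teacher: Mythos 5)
Your proof is correct and follows essentially the same route as the paper: the same distribution of the bracket in $V_2$ into three blocks, the same common-denominator/add-and-subtract estimates for the two quotients (yielding $\varepsilon_{21}$ and $\varepsilon_{22}$), and the same product splitting $R(\vec{f})[B(\vec{f})-B(\vec{g})]+[R(\vec{f})-R(\vec{g})]B(\vec{g})$ for the third block (yielding $\varepsilon_{23}$), all made uniform in $\eta$ by the $\eta$-independent bounds of Lemmas \ref{lema-cotaH} and \ref{lema-cotaFase2}. The only cosmetic point is the $D_2^*$ factor, which the stated constants $\varepsilon_{22},\varepsilon_{23}$ omit even though the terms being estimated carry it; this bookkeeping discrepancy is internal to the paper and not a gap in your argument.
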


\begin{proof}
    
On one hand we have that
\begin{equation}\label{Dif Cociente Phi 2}
    \begin{array}{ll}
         \bigg| \frac{\Phi_2(\eta,r_0,f_1,f_2)}{\Phi_2(+\infty,r_0,f_1,f_2)}-\frac{\Phi_2(\eta,r_0,g_1,g_2)}{\Phi_2(+\infty,r_0,g_1,g_2)}\bigg|\leq  \frac{| \Phi_2(\eta,r_0,f_1,f_2)-\Phi_2(\eta,r_0,g_1,g_2) |}{\Phi_2(+\infty,r_0,f_1,f_2)}\\[0.40cm]
         + \frac{\Phi_2(\eta,r_0,g_1,g_2)}{\Phi_2(+\infty,r_0,g_1,g_2)}\frac{| \Phi_2(+\infty,r_0,f_1,f_2)-\Phi_2(+\infty,r_0,g_1,g_2) |}{\Phi_2(+\infty,r_0,f_1,f_2)} \\[0.40cm]
         \leq \frac{| \Phi_2(\eta,r_0,f_1,f_2)-\Phi_2(\eta,r_0,g_1,g_2) |}{\Phi_2(+\infty,r_0,f_1,f_2)}+\frac{| \Phi_2(+\infty,r_0,f_1,f_2)-\Phi_2(+\infty,r_0,g_1,g_2) |}{\Phi_2(+\infty,r_0,f_1,f_2)} \\[0.40cm]
         \leq \frac{2 \widetilde{\Phi}_2(r_0,s_0)}{\Phi_{2inf}(+\infty,r_0,s_0)} ||\vec{f}-\vec{g}||=\varepsilon_{21}(r_0,s_0)||\vec{f}-\vec{g}||.
    \end{array}
\end{equation}
On the other hand we obtain that
\begin{equation}\label{Dif Cociente G2/Hcuad}
    \begin{array}{ll}
         \bigg| \frac{G_2(\eta,r_0,f_1,f_2)}{H^2(s_0,r_0,f_1,f_2)}-\frac{G_2(\eta,r_0,g_1,g_2)}{H^2(s_0,r_0,g_1,g_2)}\bigg| \\ \\
         \leq  \frac{|G_2(\eta,r_0,f_1,f_2)-G_2(\eta,r_0,g_1,g_2)|}{H^2(s_0,r_0,f_1,f_2)}+ \frac{|G_2(\eta,r_0,g_1,g_2)||H^2(s_0,r_0,g_1,g_2)-H^2(s_0,r_0,f_1,f_2)|}{H^2(s_0,r_0,f_1,f_2) H^2(s_0,r_0,g_1,g_2)} \\ \\
         \leq \left(  \frac{\widetilde{G}_2(r_0,s_0)}{H^2_{inf}(r_0,s_0)}+ \frac{2 G_{2sup}(r_0,s_0) H_{sup}(r_0,s_0) \wH(r_0,s_0)}{H^4_{inf}(r_0,s_0)}\right) ||\vec{f}-\vec{g}||\\ \\
         =\varepsilon_{22}(r_0,s_0)||\vec{f}-\vec{g}||.
    \end{array}
\end{equation}
In addition 
\begin{equation}\label{Dif Cociente Phi 2 con G2/Hcuad}
    \begin{array}{ll}
         \bigg| \frac{G_2(+\infty,r_0,f_1,f_2)}{H^2(s_0,r_0,f_1,f_2)}\frac{\Phi_2(\eta,r_0,f_1,f_2)}{\Phi_2(+\infty,r_0,f_1,f_2)}-\frac{G_2(+\infty,r_0,g_1,g_2)}{H^2(s_0,r_0,g_1,g_2)} \frac{\Phi_2(\eta,r_0,g_1,g_2)}{\Phi_2(+\infty,r_0,g_1,g_2)}\bigg| \\ \\
         \leq \frac{\Phi_2(\eta,r_0,f_1,f_2)}{\Phi_2(+\infty,r_0,f_1,f_2)} \bigg| \frac{G_2(+\infty,r_0,f_1,f_2)}{H^2(s_0,r_0,f_1,f_2)}-\frac{G_2(+\infty,r_0,g_1,g_2)}{H^2(s_0,r_0,g_1,g_2)}\bigg| \\ \\
         + \frac{G_2(+\infty,r_0,g_1,g_2)}{H^2(s_0,r_0,g_1,g_2)} \bigg| \frac{\Phi_2(\eta,r_0,f_1,f_2)}{\Phi_2(+\infty,r_0,f_1,f_2)}-\frac{\Phi_2(\eta,r_0,g_1,g_2)}{\Phi_2(+\infty,r_0,g_1,g_2)}\bigg| \\ \\
         \leq \left(\frac{\Phi_{2sup}(r_0,s_0)}{\Phi_{2inf}(+\infty,r_0,s_0)} \varepsilon_{22}(r_0,s_0)+ \frac{G_{2sup}(r_0,s_0)}{H^2_{inf}(r_0,s_0)}\varepsilon_{21}(r_0,s_0)\right)||\vec{f}-\vec{g}||\\ \\
         = \varepsilon_{23}(r_0,s_0)||\vec{f}-\vec{g}||.
    \end{array}
\end{equation}
Taking into account the previous inequalities, for each $\eta\geq r_0$,  it follows   that
\begin{equation}\label{Resta V2}
    \begin{array}{l}
    |V_2(\vec{f})(\eta)-V_2(\vec{g})(\eta)| \\[0.25cm]
  \leq  \bigg|
    \frac{D_2^*G_2(+\infty,r_0,f_1,f_2)}{H^{2}(r_0,s_0,f_1,f_2)}\frac{\Phi_2(\eta,r_0,f_1,f_2)}{\Phi_2(+\infty,r_0,f_1,f_2)}-\frac{D_2^*G_2(+\infty,r_0,g_1,g_2)}{H^{2}(r_0,s_0,g_1,g_2)}\frac{\Phi_2(\eta,r_0,g_1,g_2)}{\Phi_2(+\infty,r_0,g_1,g_2)}\bigg| \\[0.25cm] 
    +\bigg| \frac{\Phi_2(\eta,r_0,f_1,f_2)}{\Phi_2(+\infty,r_0,f_1,f_2)}-\frac{\Phi_2(\eta,r_0,g_1,g_2)}{\Phi_2(+\infty,r_0,g_1,g_2)}\bigg|
    + \bigg|\frac{D_2^*G_2(\eta,r_0,f_1,f_2)}{H^{2}(r_0,s_0,f_1,f_2)}-\frac{D_2^*G_2(\eta,r_0,g_1,g_2)}{H^{2}(r_0,s_0,g_1,g_2)}\bigg|\\[0.25cm] 
    \leq \varepsilon_2(r_0,s_0)||\vec{f}-\vec{g}||.
\end{array}
\end{equation}

\end{proof}

\begin{theorem}
For every $\vec{f}=(f_1,f_2)$,  $\vec{g}=(g_1,g_2)\in\mathcal{K}$ it follows that
$$|| \Psi(\vec{f})-\Psi(\vec{g}) ||\leq  \varepsilon(r_0,s_0)||\vec{f} -\vec{g}||$$
with
\begin{equation}\label{epsilon}
    \varepsilon(r_0,s_0)=\max \left\lbrace \varepsilon_1(r_0,s_0),\varepsilon_2(r_0,s_0) \right\rbrace
\end{equation}
where $\varepsilon_1(r_0,s_0)$ and $\varepsilon_2(r_0,s_0)$ are given by \eqref{epsilon1} and \eqref{epsilon2}, respectively.
\end{theorem}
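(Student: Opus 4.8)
The plan is to deduce the stated contraction-type estimate directly from the two preceding lemmas, which already supply the componentwise Lipschitz bounds
$$\|V_1(\vec{f})-V_1(\vec{g})\|_{C[s_0,r_0]}\leq \varepsilon_1(r_0,s_0)\,\|\vec{f}-\vec{g}\|,\qquad \|V_2(\vec{f})-V_2(\vec{g})\|_{C_b[r_0,+\infty)}\leq \varepsilon_2(r_0,s_0)\,\|\vec{f}-\vec{g}\|.$$
Since all the analytic work has been carried out in establishing these two inequalities (controlling the differences of $\Phi_i$, $G_i$, $H_i$, and the quotients $G_i/H^2$ and $\Phi_2/\Phi_2(+\infty)$ through Lemmas \ref{lema-cotaH}, \ref{lema-cotaFase1} and \ref{lema-cotaFase2}), the present statement is essentially a packaging step: it merely transfers the two separate component estimates into a single estimate for the product operator $\Psi$.

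First I would recall that $\Psi(\vec{f})=(V_1(\vec{f}),V_2(\vec{f}))$ by \eqref{Psi}, and that the norm on the product space $\mathcal{C}=C[s_0,r_0]\times C_b[r_0,+\infty)$ defined in \eqref{mathcal C} is the maximum norm
$$\|\Psi(\vec{f})-\Psi(\vec{g})\|=\max\Big\{\|V_1(\vec{f})-V_1(\vec{g})\|_{C[s_0,r_0]},\,\|V_2(\vec{f})-V_2(\vec{g})\|_{C_b[r_0,+\infty)}\Big\}.$$
Then I would substitute the two componentwise bounds into this maximum, obtaining
$$\|\Psi(\vec{f})-\Psi(\vec{g})\|\leq \max\big\{\varepsilon_1(r_0,s_0)\,\|\vec{f}-\vec{g}\|,\,\varepsilon_2(r_0,s_0)\,\|\vec{f}-\vec{g}\|\big\}=\max\{\varepsilon_1(r_0,s_0),\varepsilon_2(r_0,s_0)\}\,\|\vec{f}-\vec{g}\|,$$
where the nonnegative factor $\|\vec{f}-\vec{g}\|$ can be pulled outside the maximum. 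Recognizing the coefficient as $\varepsilon(r_0,s_0)$ defined in \eqref{epsilon} completes the argument.

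There is no genuine obstacle in this step: the difficulty of the construction is entirely absorbed into the two lemmas, and what remains is only the elementary observation that the Lipschitz constant of a map into a product equipped with the maximum norm is the maximum of the Lipschitz constants of its components. The only point deserving a brief word is that both estimates share the common right-hand side $\|\vec{f}-\vec{g}\|$ measured in the full product norm, so that factoring it out is legitimate and the resulting constant is exactly the maximum in \eqref{epsilon}. I would remark in passing that this is the estimate that will subsequently be used to guarantee the contraction property $\varepsilon(r_0,s_0)<1$ for suitable $s_0,r_0$, thereby enabling the Banach fixed point theorem, although verifying that inequality is a separate matter not addressed by the present statement.
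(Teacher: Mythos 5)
Your proposal is correct and follows essentially the same route as the paper: both invoke the two preceding lemmas for the componentwise Lipschitz bounds on $V_1$ and $V_2$, use the maximum-norm structure of $\mathcal{C}$, and factor out $\|\vec{f}-\vec{g}\|$ to identify the constant $\varepsilon(r_0,s_0)$. If anything, your write-up is slightly more careful than the paper's, which writes an equality where an inequality is meant after substituting the lemma bounds.
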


\begin{proof}
From the previous lemmas we have that
$$\begin{array}{ll}|| \Psi(\vec{f})-\Psi(\vec{g}) ||=\max\left\lbrace ||V_1(\vec{f})-V_1(\vec{g}) ||_{C[s_0,r_0]},||V_2(\vec{f})-V_2(\vec{g}) ||_{C_b[r_0,+\infty)} \right\rbrace \\ \\
=\max\left\lbrace \varepsilon_1(r_0,s_0) ||\vec{f}-\vec{g} ||, \varepsilon_2(r_0,s_0) ||\vec{f}-\vec{g} || \right\rbrace = \varepsilon(r_0,s_0) ||\vec{f}-\vec{g} ||.
\end{array}$$

\end{proof}

Now we will look for conditions that guarantee that $\Psi$ is a contraction mapping.

For each $s_0>0$ fixed, we define the functions $\varepsilon_{1,s_0}(r_0)=\varepsilon_{1}(r_0,s_0)$ and $\varepsilon_{2,s_0}(r_0)=\varepsilon_{2}(r_0,s_0)$, for all $r_0>s_0$ where $\varepsilon_{1} $, $\varepsilon_{2} $ are given by \eqref{epsilon1} and \eqref{epsilon2}, respectively. 
The following results hold:
\begin{lemma} \label{lema-epsilon1}$ $
   \begin{enumerate}
       \item[a)] The function $\varepsilon_{1,s_0}$ is a decreasing function that satisfies $\varepsilon_{1,s_0}(s_0)=+\infty$ and $\varepsilon_{1,s_0}(+\infty)=j_1(s_0)$ where
       \begin{equation}\label{j1}
       \begin{array}{ll}
           j_1(s_0)=2s_0^{\nu}Q\exp(-s_0^{2})\widetilde{\Phi}_1(+\infty,s_0)\\ \\
           +2D_1^*\bigg(\tfrac{G_{1sup}(+\infty,s_0)2H_{sup}(+\infty,s_0)\wH(+\infty,s_0) }{H^{4}_{inf}(+\infty,s_0)}+\tfrac{\widetilde{G}_1(+\infty,s_0)}{H^{2}_{inf}+\infty,s_0)}\bigg)
           \end{array}
       \end{equation}
       \item[b)]   If 
    \begin{equation}\label{hip-epsilon1}
        \tfrac{2D_1^* \wK_1}{L_{1m} K_{1m}^2} \left( \tfrac{2 K_{1M}}{K_{1m}^2}+1 \right)<1
    \end{equation}
    then there exists a unique $s_1>0$ such that $j_1(s_0)<1$ for all $s_0>s_1$.
    Moreover, for each $s_0>s_1$ there exists $r_1=r_1(s_0)>s_0$ such that $\varepsilon_{1,s_0}(r_1)=1$  and $\varepsilon_{1,s_0}(r_0)<1$ for all $r_0>r_1$. 
   \end{enumerate}
\end{lemma}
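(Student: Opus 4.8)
The plan is to prove the two parts in turn, reducing everything to the elementary monotonicity and limiting behaviour of the explicit building blocks collected in Lemmas \ref{lema-cotaH} and \ref{lema-cotaFase1}. For part a) I would first record how each factor entering \eqref{epsilon1} depends on $r_0$. Since $\tfrac{1}{r_0^{\mu+\nu-1}}$ is strictly decreasing in $r_0$, formula \eqref{Hinf} shows that $H_{inf}(r_0,s_0)$ is strictly increasing, while \eqref{Hsup} and \eqref{Hraya} show that $H_{sup}$ and $\wH$ are decreasing. Feeding $1/H_{inf}$ into \eqref{defE1inf} makes $E_{1inf}$ increasing in $r_0$, and into \eqref{E1raya} makes $\widetilde{E}_1$ decreasing; consequently the quantities defined in \eqref{Phi1raya}, \eqref{H1sup}, \eqref{G1sup}, \eqref{H1raya} and \eqref{G 1 raya} are all decreasing in $r_0$. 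Substituting these signs into \eqref{epsilon1}, the first summand is decreasing because $\widetilde{\Phi}_1$ is, and each of the remaining two summands is a nonnegative decreasing numerator divided by an increasing power of $H_{inf}$; since $H_{inf}$ is $\emph{strictly}$ increasing, each such quotient is strictly decreasing, whence $\varepsilon_{1,s_0}$ is strictly decreasing.

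Then I would compute the two endpoint limits. As $r_0\to s_0^+$ one has $H_{inf}(r_0,s_0)\to 0^+$ by \eqref{Hinf}, so $1/H_{inf}^{2},\,1/H_{inf}^{4}\to+\infty$ and, through \eqref{defE1inf}, $E_{1inf}\to 0^+$, which further blows up $H_{1sup}$, $G_{1sup}$ and $\widetilde{G}_1$; hence $\varepsilon_{1,s_0}(s_0^+)=+\infty$. As $r_0\to+\infty$ every occurrence of $\tfrac{1}{r_0^{\mu+\nu-1}}$ tends to $0$, so all the building blocks converge to their values with this term deleted, and passing to the limit in \eqref{epsilon1} yields exactly \eqref{j1}, i.e. $\varepsilon_{1,s_0}(+\infty)=j_1(s_0)$. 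Continuity of $\varepsilon_{1,s_0}$ on $(s_0,+\infty)$ is immediate, as it is assembled from continuous functions with nonvanishing denominators. This completes part a).

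For part b) I would first determine $\lim_{s_0\to+\infty}j_1(s_0)$. The term $2s_0^{\nu}Q e^{-s_0^{2}}\widetilde{\Phi}_1(+\infty,s_0)$ vanishes because the exponential dominates the polynomial growth of $\widetilde{\Phi}_1$ (using $\mu>2$ from (A5)). In the remaining bracket I would use $E_{1inf}(+\infty,s_0)\to 1$ and $\widetilde{E}_1(+\infty,s_0)\to 0$ and retain only the leading order in $1/s_0$: by \eqref{Hinf}, \eqref{Hsup}, \eqref{Hraya}, \eqref{H1sup}, \eqref{G1sup} and \eqref{G 1 raya} the powers $s_0^{\mu+\nu-1}$ cancel, leaving a constant equal to the left-hand side of \eqref{hip-epsilon1}. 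Thus $j_1(+\infty)$ equals the left-hand side of \eqref{hip-epsilon1}, which is $<1$ precisely under that hypothesis. Combining this with $j_1(0^+)=+\infty$ (again because $H_{inf}\to+\infty$ forces $E_{1inf}\to 0$) and the continuity of $j_1$, the set $\{s_0>0:\ j_1(s_0)\ge 1\}$ is nonempty and bounded above; defining $s_1$ as its supremum gives $j_1(s_1)=1$ and $j_1(s_0)<1$ for every $s_0>s_1$.

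Finally, fixing any $s_0>s_1$, part a) tells us that $\varepsilon_{1,s_0}$ is continuous and strictly decreasing from $\varepsilon_{1,s_0}(s_0^+)=+\infty$ down to $\varepsilon_{1,s_0}(+\infty)=j_1(s_0)<1$; the intermediate value theorem together with strict monotonicity then yields a unique $r_1=r_1(s_0)\in(s_0,+\infty)$ with $\varepsilon_{1,s_0}(r_1)=1$ and $\varepsilon_{1,s_0}(r_0)<1$ for all $r_0>r_1$, as claimed. The main obstacle I anticipate is the bookkeeping in the first two paragraphs: one must track the sign of the $r_0$-dependence through the nested definitions \eqref{defE1inf}--\eqref{G 1 raya}, where $H_{inf}$ appears both inside exponents and in denominators, and then verify that the leading-order cancellation produces exactly the constant in \eqref{hip-epsilon1} rather than some other combination of the bounds $K_{1m},K_{1M},L_{1m},\wK_1$.
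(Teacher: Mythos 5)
Your proposal is correct and follows essentially the same route as the paper: part a) by tracking the monotonicity in $r_0$ of the building blocks from Lemmas \ref{lema-cotaH} and \ref{lema-cotaFase1} together with the limits $H_{inf}\to 0^+$ as $r_0\to s_0^+$ and the finite limits as $r_0\to+\infty$, and part b) by identifying $j_1(+\infty)$ with the constant in \eqref{hip-epsilon1}, noting $j_1$ blows up as $s_0\to 0^+$, and then invoking part a) with the intermediate value theorem. The only cosmetic difference is that the paper asserts $j_1$ is decreasing to get the unique $s_1$ with $j_1(s_1)=1$, whereas you obtain $s_1$ as a supremum from continuity; both yield the required threshold.
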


\begin{proof}  $   $
    \begin{enumerate}

        \item[a)] According to the definition of $\varepsilon_1$ given by \eqref{epsilon1}, the proof follows straightforwardly from Lemmas \ref{lema-cotaH} and \ref{lema-cotaFase1}.
        
        \item[b)] From the definition of $j_1$ given by \eqref{j1}, we have that it is a decreasing function that satisfies $j_1(0)=+\infty$ and $j_1(+\infty)=\tfrac{2D_1^* \wK_1}{L_{1m} K_{1m}^2} \left( \tfrac{2 K_{1M}}{K_{1m}^2}+1 \right)$. Then, assuming \eqref{hip-epsilon1} it follows that there exists a unique $s_1>s_0$ such that $j_1(s_1)=1$ and  $j_1(s_0)<1$ for all $s_0>s_1$. Moreover, from item a), for each $s_0>s_1$ there exists $r_1=r_1(s_0)>s_0$ such that $\varepsilon_{1,s_0}(r_1)=1$  and $\varepsilon_{1,s_0}(r_0)<1$ for all $r_0>r_1$. 
    \end{enumerate}
\end{proof}

\begin{lemma}\label{lema-epsilon2} $ $
   \begin{enumerate}
       \item[a)] The function $\varepsilon_{2,s_0}$ is a decreasing function that satisfies $\varepsilon_{2,s_0}(s_0)=+\infty$ and $\varepsilon_{2,s_0}(+\infty)=0$.

    \item[b)] For each $s_0>0$ there exists $r_2=r_2(s_0)>s_0$ such that $\varepsilon_{2,s_0}(r_2)=1$  and $\varepsilon_{2,s_0}(r_0)<1$ for all $r_0>r_2$. 
   \end{enumerate}
\end{lemma}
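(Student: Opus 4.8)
The plan is to prove part a) first, since part b) follows immediately from it together with continuity and the intermediate value theorem. The whole argument rests on a single structural observation: every quantity entering the definition \eqref{epsilon2} of $\varepsilon_2$ is built, through products and quotients, from a small collection of elementary factors whose monotonicity in $r_0$ (with $s_0$ fixed) is transparent.

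First I would record the monotonicity of these elementary factors. As $r_0$ increases: the powers $r_0^{-k}$ with $k>0$ decrease; from \eqref{Hinf}--\eqref{Hraya}, $H_{inf}(r_0,s_0)$ increases (so $H_{inf}^{-1}$ decreases) while $H_{sup}(r_0,s_0)$ and $\wH(r_0,s_0)$ decrease; and $E_{2inf}(r_0,s_0)$ increases, because in \eqref{E2inf} its exponent is minus a sum of terms each of the form $r_0^{-k}$ or $H_{inf}^{-1}r_0^{-k}$, all decreasing. Consequently $E_{2inf}^{-1}$ decreases, and inspecting \eqref{E2raya} term by term shows that $\wE_2(r_0,s_0)$ is a sum of decreasing factors (each a product of $r_0^{-k}$, $H_{inf}^{-1}$ and $\wH$), hence decreasing. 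These are the only atoms I need.

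Next I would propagate monotonicity upward. Using that a product or quotient of positive decreasing functions is decreasing, one checks in order that $H_{2sup}$, $\widetilde{H}_2$, $\widetilde{\Phi}_2$, $G_{2sup}$ and $\widetilde{G}_2$ from \eqref{H2sup}--\eqref{G2raya} are all decreasing in $r_0$; that the ratio $\Phi_{2sup}(r_0)/\Phi_{2inf}(+\infty,r_0,s_0)=L_{2M}/(L_{2m}E_{2inf})$ is decreasing; and therefore that $\varepsilon_{21}$, $\varepsilon_{22}$ and $\varepsilon_{23}$ are each decreasing, whence $\varepsilon_{2,s_0}=\varepsilon_{21}+\varepsilon_{22}+\varepsilon_{23}$ is decreasing. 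For the limits I would observe that, as $r_0\to s_0^+$, $H_{inf}\to 0$ and $E_{2inf}\to 0$ force every term to blow up, giving $\varepsilon_{2,s_0}(s_0)=+\infty$; while as $r_0\to+\infty$, $H_{inf}$ tends to a positive constant and $E_{2inf}\to 1$, and all $r_0^{-k}$ together with $\wE_2$ tend to $0$, so each $\varepsilon_{2i}\to 0$ and hence $\varepsilon_{2,s_0}(+\infty)=0$.

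Finally, part b) is immediate: $\varepsilon_{2,s_0}$ is continuous and strictly decreasing from $+\infty$ to $0$, so by the intermediate value theorem there is a unique $r_2=r_2(s_0)>s_0$ with $\varepsilon_{2,s_0}(r_2)=1$, and monotonicity gives $\varepsilon_{2,s_0}(r_0)<1$ for all $r_0>r_2$. In contrast with Lemma \ref{lema-epsilon1}, no additional hypothesis analogous to \eqref{hip-epsilon1} is needed here, precisely because the limit at infinity is $0$ rather than a positive constant. The main obstacle is purely bookkeeping: organizing the nested products and quotients so that at each stage one multiplies only decreasing-by-decreasing (never decreasing-by-increasing), which is exactly what makes the crude bounds of Lemmas \ref{lema-cotaH} and \ref{lema-cotaFase2} yield monotone, and not merely bounded, expressions.
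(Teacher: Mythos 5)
Your argument is correct and follows the same route the paper intends: the paper's own proof of this lemma is a one-line appeal to the definition \eqref{epsilon2} together with the bounds of Lemmas \ref{lema-cotaH} and \ref{lema-cotaFase2}, and your monotonicity bookkeeping (including the cancellation of the $r_0^{-(\mu+\nu-1)}$ factors in the ratios involving $\Phi_{2inf}(+\infty,r_0,s_0)$, which is the one place where a naive decreasing-times-decreasing argument would not apply) simply supplies the details the authors omit. Your closing observation that no hypothesis analogous to \eqref{hip-epsilon1} is needed because the limit at infinity is $0$ is exactly the right contrast with Lemma \ref{lema-epsilon1}.
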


\begin{proof} $ $
    \begin{enumerate}
        \item[a)] It follows from Lemmas \ref{lema-cotaH} and \ref{lema-cotaFase2}  taking into account that $\varepsilon_2$ is defined  by \eqref{epsilon2}.

        \item[b)] It is clear from item a)
    \end{enumerate}
\end{proof}

\begin{theorem}
  If inequality \eqref{hip-epsilon1} holds, then for each $(r_0,s_0)\in \Sigma$ with
    \begin{equation}\label{sigma}
        \Sigma=\lbrace (r_0,s_0) : \; s_0>s_1,\;r_0>\overline{r}_0(s_0) \rbrace
    \end{equation}
     we have that $\varepsilon(r_0,s_0)<1$ where $\varepsilon$ is given by \eqref{epsilon} and  
     \begin{equation}\label{r0raya}
     \overline{r}_0(s_0)=\max\lbrace r_1(s_0),r_2(s_0)\rbrace
     \end{equation}
    with
     $s_1,r_1$ and $r_2$  defined in Lemmas \ref{lema-epsilon1} and \ref{lema-epsilon2}, respectively. 
\end{theorem}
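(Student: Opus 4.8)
The plan is to derive the bound $\varepsilon(r_0,s_0)<1$ directly by combining Lemmas \ref{lema-epsilon1} and \ref{lema-epsilon2}, since the substantive analytic work---the monotonicity of $\varepsilon_{1,s_0}$ and $\varepsilon_{2,s_0}$ together with the computation of their limits---has already been carried out there. Recalling that $\varepsilon(r_0,s_0)=\max\{\varepsilon_1(r_0,s_0),\varepsilon_2(r_0,s_0)\}$ by \eqref{epsilon}, it suffices to show that each of the two arguments of the maximum is strictly less than $1$ whenever $(r_0,s_0)\in\Sigma$.

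First I would fix $(r_0,s_0)\in\Sigma$, so that $s_0>s_1$ and $r_0>\overline{r}_0(s_0)=\max\{r_1(s_0),r_2(s_0)\}$. Since \eqref{hip-epsilon1} is assumed, I may invoke Lemma \ref{lema-epsilon1}(b): because $s_0>s_1$, the threshold $r_1(s_0)$ is well defined and $\varepsilon_{1,s_0}(r_0)<1$ for every $r_0>r_1(s_0)$. As $r_0>\overline{r}_0(s_0)\geq r_1(s_0)$, this yields $\varepsilon_1(r_0,s_0)=\varepsilon_{1,s_0}(r_0)<1$.

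Next I would apply Lemma \ref{lema-epsilon2}(b), which holds for every $s_0>0$ and in particular for our chosen $s_0>s_1$: the threshold $r_2(s_0)$ is well defined and $\varepsilon_{2,s_0}(r_0)<1$ for all $r_0>r_2(s_0)$. Since $r_0>\overline{r}_0(s_0)\geq r_2(s_0)$, I obtain $\varepsilon_2(r_0,s_0)=\varepsilon_{2,s_0}(r_0)<1$. Taking the maximum of these two estimates gives $\varepsilon(r_0,s_0)=\max\{\varepsilon_1(r_0,s_0),\varepsilon_2(r_0,s_0)\}<1$, which is the desired conclusion and, via the earlier contraction estimate for $\Psi$, sets up the application of the Banach fixed point theorem.

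There is no genuine obstacle remaining at this stage; the essential difficulty lies earlier, in verifying within Lemmas \ref{lema-epsilon1} and \ref{lema-epsilon2} that the constants $\varepsilon_1$ and $\varepsilon_2$ are truly decreasing in $r_0$ and that their limits at $+\infty$ behave as claimed---in particular, that the structural hypothesis \eqref{hip-epsilon1} forces $\lim_{s_0\to+\infty}j_1(s_0)<1$, thereby guaranteeing the existence of $s_1$. The only point meriting care in the present argument is purely organizational: ensuring that $\overline{r}_0(s_0)$ dominates \emph{both} thresholds $r_1(s_0)$ and $r_2(s_0)$ simultaneously, so that the two one-sided estimates can be combined on the common admissible region $\Sigma$.
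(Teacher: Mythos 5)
Your argument is correct and is exactly the route the paper takes: the paper's proof simply states that the result follows immediately from Lemmas \ref{lema-epsilon1} and \ref{lema-epsilon2}, and you have supplied precisely the details implicit in that assertion (applying part (b) of each lemma on the region where $r_0$ exceeds both thresholds $r_1(s_0)$ and $r_2(s_0)$, then taking the maximum). No discrepancies to report.
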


\begin{proof}
    The proof follows immediately by Lemmas \ref{lema-epsilon1} and \ref{lema-epsilon2}.
\end{proof}

\begin{corollary}\label{Coro: Psi contractivo}
    Under the assumption \eqref{hip-epsilon1}, for each $(r_0,s_0)\in \Sigma$, the operator $\Psi$ defined by \eqref{Psi} is a contraction mapping.
\end{corollary}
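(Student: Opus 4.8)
The plan is to read this corollary as an immediate synthesis of the two global statements proved just before it: the Lipschitz estimate for $\Psi$ and the bound $\varepsilon(r_0,s_0)<1$ on the region $\Sigma$. Recall that an operator is a contraction mapping precisely when it is Lipschitz with a constant strictly below $1$; so it suffices to exhibit one such constant, and crucially to check that this constant is independent of the functions being compared.

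First I would invoke the Theorem establishing that, for all $\vec{f},\vec{g}\in\mathcal{K}$,
$$\| \Psi(\vec{f}) - \Psi(\vec{g}) \| \leq \varepsilon(r_0,s_0)\, \| \vec{f} - \vec{g} \|,$$
where $\varepsilon(r_0,s_0)=\max\{\varepsilon_1(r_0,s_0),\varepsilon_2(r_0,s_0)\}$ is the constant of \eqref{epsilon}. Since $\varepsilon(r_0,s_0)$ depends only on the fixed parameters $r_0,s_0$ and the structural constants of (A1)--(A5), and not on $\vec{f}$ or $\vec{g}$, this already certifies that $\Psi$ is Lipschitz on $\mathcal{K}$ with a uniform constant at most $\varepsilon(r_0,s_0)$. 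Then I would apply the Theorem guaranteeing that, under hypothesis \eqref{hip-epsilon1} and for $(r_0,s_0)\in\Sigma$, one has $\varepsilon(r_0,s_0)<1$. Setting $k=\varepsilon(r_0,s_0)\in[0,1)$ yields exactly the contraction property, completing the argument.

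I do not expect any genuine obstacle in this final step, since all of the analytical effort was already absorbed into Lemmas \ref{lema-cotaH}--\ref{lema-epsilon2} and the two theorems that build the explicit decreasing profiles of $\varepsilon_{1,s_0}$ and $\varepsilon_{2,s_0}$ and locate the region $\Sigma$ on which their maximum falls below one. The single point I would stress, to ensure the conclusion is not vacuous, is that $\Sigma$ is nonempty under \eqref{hip-epsilon1}: this is precisely what Lemma \ref{lema-epsilon1} b) provides through the threshold $s_1$ and the curve $\overline{r}_0(s_0)$, combined with Lemma \ref{lema-epsilon2} b). If one subsequently wishes to invoke the Banach fixed point theorem for existence and uniqueness, the self-mapping property $\Psi(\mathcal{K})\subseteq\mathcal{K}$ must still be verified separately; but that invariance lies outside the scope of the present contraction claim, which concerns only the Lipschitz constant.
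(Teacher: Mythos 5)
Your proposal is correct and follows exactly the paper's intended route: the corollary is an immediate combination of the Lipschitz bound $\|\Psi(\vec{f})-\Psi(\vec{g})\|\leq \varepsilon(r_0,s_0)\|\vec{f}-\vec{g}\|$ with the preceding theorem giving $\varepsilon(r_0,s_0)<1$ on $\Sigma$, which is why the paper states the corollary without a separate proof. Your added remarks on the nonemptiness of $\Sigma$ and on deferring the self-mapping property to the fixed-point theorem are consistent with how the paper organizes these steps.
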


 \begin{theorem} \label{teo punto fijo}
       Under the assumption \eqref{hip-epsilon1}, for each $(r_0,s_0)\in \Sigma$, there exists a unique fixed point $(f_1^*,f_2^*)\in\mathcal{K}$ to the operator $\Psi$.
 \end{theorem}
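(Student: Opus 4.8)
The plan is to apply the Banach fixed point theorem on $\mathcal{K}$. First I would establish that $\mathcal{K}=C[s_0,r_0]\times\mathcal{M}$, endowed with the metric induced by $\Vert\cdot\Vert$, is a complete metric space. This follows because $\mathcal{C}$ defined in \eqref{mathcal C} is a Banach space, $C[s_0,r_0]$ is complete, and $\mathcal{M}$ is closed in $C_b[r_0,+\infty)$: the two defining constraints $f_2(r_0)=0$ and $f_2(+\infty)=-1$ are preserved under uniform limits, so $\mathcal{K}$ is a closed subset of $\mathcal{C}$ and hence complete.

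Next I would verify the self-mapping property $\Psi(\mathcal{K})\subset\mathcal{K}$, which is the piece not already contained in Corollary \ref{Coro: Psi contractivo}. For the first component, $V_1(\vec{f})$ is continuous on $[s_0,r_0]$ because $\Phi_1(\eta,s_0,f_1,f_2)$ and $G_1(\eta,s_0,f_1,f_2)$ depend continuously on $\eta$ (they are integrals of continuous integrands) and are finite thanks to the sup-estimates of Lemma \ref{lema-cotaFase1}; hence $V_1(\vec{f})\in C[s_0,r_0]$. For the second component I must check $V_2(\vec{f})\in\mathcal{M}$. Boundedness and continuity on $[r_0,+\infty)$ follow from the bounds \eqref{Phi2sup}, \eqref{cotaG2sup} and \eqref{cotaHinf}, together with the strict positivity $\Phi_2(+\infty,r_0,f_1,f_2)\geq\Phi_{2inf}(+\infty,r_0,s_0)>0$ coming from \eqref{cotaPhi2inf}, which makes the ratio $\Phi_2(\eta,\cdot)/\Phi_2(+\infty,\cdot)$ well defined and bounded. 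The two boundary constraints are automatic from the construction of $V_2$: evaluating at $\eta=r_0$ gives $\Phi_2(r_0,\cdot)=G_2(r_0,\cdot)=0$, whence $V_2(\vec{f})(r_0)=0$; and letting $\eta\to+\infty$ the ratio tends to $1$, so the two terms containing $G_2(+\infty,\cdot)$ cancel and $V_2(\vec{f})(+\infty)=-1$. Thus $V_2(\vec{f})\in\mathcal{M}$ and $\Psi(\mathcal{K})\subset\mathcal{K}$.

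Having $\Psi:\mathcal{K}\to\mathcal{K}$ and, by Corollary \ref{Coro: Psi contractivo}, $\Psi$ a contraction for every $(r_0,s_0)\in\Sigma$ (the constant $\varepsilon(r_0,s_0)$ of \eqref{epsilon} satisfies $\varepsilon(r_0,s_0)<1$ under hypothesis \eqref{hip-epsilon1}), the Banach fixed point theorem applies directly and produces a unique $(f_1^*,f_2^*)\in\mathcal{K}$ with $\Psi(f_1^*,f_2^*)=(f_1^*,f_2^*)$, which is the assertion.

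I expect the main obstacle to be the self-mapping verification rather than the contraction, since the contractivity has already been isolated in Corollary \ref{Coro: Psi contractivo}. Within the self-mapping step the delicate points are the well-definedness of the quotient in $V_2$ (requiring the lower bound \eqref{cotaPhi2inf} on $\Phi_2$ at infinity) and the exact evaluation of the limit $V_2(\vec{f})(+\infty)=-1$; once these are settled the remaining bookkeeping is routine and the theorem follows.
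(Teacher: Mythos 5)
Your proposal is correct and follows essentially the same route as the paper: closedness of $\mathcal{K}$ in the Banach space $\mathcal{C}$, the self-mapping property $\Psi(\mathcal{K})\subset\mathcal{K}$, contractivity from Corollary \ref{Coro: Psi contractivo}, and then the Banach fixed point theorem. You supply more detail on the self-mapping step than the paper does, and your evaluation $V_2(\vec{f})(+\infty)=-1$ is the correct one (the paper's proof states $V_2(\vec{f})(+\infty)=0$, evidently a typo, since membership in $\mathcal{M}$ requires the limit $-1$).
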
   
\begin{proof}
First, notice that $\mathcal{K}$ is a closed subset of the Banach space $\mathcal{C}$ given by \eqref{mathcal C}. 
In addition, it is easy to see  that $\Psi(\vec{f})\in \mathcal{K}$ given that    $V_1(\vec{f})\in C[s_0,r_0]$, $V_2(\vec{f})\in C_b[r_0,+\infty)$, $V_2(\vec{f})(r_0)=0$ and $V_2(\vec{f})(+\infty)=0$.
Finally, according to Corollary \ref{Coro: Psi contractivo}, under the assumption \eqref{hip-epsilon1}, for each $(r_0,s_0)\in \Sigma$ it results that $\Psi$ is a contraction mapping.
As a consequence, applying the fixed point Banach theorem, there exists a unique  fixed point $(f_1^*,f_2^*)\in\mathcal{K}$ to the operator $\Psi$ for each  $(r_0,s_0)\in \Sigma$.
    
\end{proof}

\begin{corollary}
   If \eqref{hip-epsilon1} holds, for each  $(r_0,s_0)\in \Sigma$,  there exists a unique solution $(f_1^*,f_2^*)$ to the system of equations \eqref{eq41}-\eqref{eq42}.
\end{corollary}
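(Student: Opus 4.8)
The plan is to recognize this corollary as an immediate consequence of Theorem \ref{teo punto fijo}, obtained by transporting the existence and uniqueness of a fixed point of $\Psi$ back to the integral system through the equivalence established when $\Psi$ was introduced. The first step is to make that equivalence explicit: by construction, the component $V_1(\vec{f})$ defined in \eqref{V1} is exactly the right-hand side of equation \eqref{eq41}, and $V_2(\vec{f})$ is exactly the right-hand side of \eqref{eq42}. Hence a pair $\vec{f}=(f_1,f_2)\in\mathcal{K}$ solves the system \eqref{eq41}-\eqref{eq42} if and only if $V_1(\vec{f})=f_1$ on $[s_0,r_0]$ and $V_2(\vec{f})=f_2$ on $[r_0,+\infty)$, that is, if and only if $\Psi(\vec{f})=\vec{f}$. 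Thus solving the integral system is literally the same problem as finding a fixed point of the operator $\Psi$ defined in \eqref{Psi}.

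Second, I would simply invoke the hypotheses of Theorem \ref{teo punto fijo}: assuming \eqref{hip-epsilon1} and taking $(r_0,s_0)\in\Sigma$, that theorem already guarantees the existence of a unique fixed point $(f_1^*,f_2^*)\in\mathcal{K}$ of $\Psi$. Applying the equivalence from the first step, this $(f_1^*,f_2^*)$ is then precisely the unique solution of \eqref{eq41}-\eqref{eq42} in $\mathcal{K}$, which is what the corollary asserts.

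Since the substantive work — the self-mapping property $\Psi(\mathcal{K})\subset\mathcal{K}$, the contraction estimate $\varepsilon(r_0,s_0)<1$ valid on $\Sigma$, and the application of the Banach fixed point theorem — is entirely carried out in the preceding lemmas and in Theorem \ref{teo punto fijo}, there is no genuine analytic obstacle remaining; the corollary is a translation statement. The only point deserving care is to be explicit that both existence and uniqueness are understood within the space $\mathcal{K}=C[s_0,r_0]\times\mathcal{M}$, so that the claimed uniqueness of the solution coincides with the uniqueness of the fixed point delivered by the Banach theorem, rather than a global uniqueness assertion in some larger function class.
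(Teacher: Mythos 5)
Your proposal is correct and matches the paper's (implicit) argument exactly: the paper itself notes that solving \eqref{eq41}--\eqref{eq42} is equivalent to finding a fixed point of $\Psi$, and the corollary is then an immediate restatement of Theorem \ref{teo punto fijo}. Your added remark that uniqueness is to be understood within $\mathcal{K}=C[s_0,r_0]\times\mathcal{M}$ is a sensible clarification but does not change the substance.
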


It remains to prove the existence of solution $(r_0,s_0)\in\Sigma$ to the system of equations given by \eqref{Ec1-s0,r0} and  \eqref{Ec2-s0,r0} where $f_1=f_1^*$ and $f_2=f_2^*$ are the unique solutions to the equations \eqref{eq41}-\eqref{eq42}. For that purpose we will need some preliminary results.

Let us notice that equation \eqref{Ec1-s0,r0} can be rewritten as
\begin{equation}\label{ec reemplaza 43}
  X(r_0,s_0)=Y(r_0,s_0)
\end{equation}
where 
\begin{equation}
      X(r_0,s_0)= Z(r_0,s_0)-B,\quad   Z(r_0,s_0)=\tfrac{D_1^* G_1(r_0,s_0,f_1^*,f_2^*)}{H^2(r_0,s_0,f_1^*,f_2^*)}
\end{equation}
and
\begin{equation}\label{Y}\begin{array}{ll}
   Y(r_0,s_0)=-Q s_0^{\nu} \exp(-s_0^2) \Phi_1(r_0,s_0,f_1^*,f_2^*).
    \end{array}
\end{equation}

\begin{lemma}\label{lem prop ec 43}
The following properties hold:
 \begin{itemize}
     \item[a)] $Y(r_0,s_0)<0$ for each $(r_0,s_0)\in\Sigma$.

     \item[b)] $Z(r_0,s_0)>Z_{inf}(r_0,s_0)$ for each $(r_0,s_0)\in\Sigma$,
where
$$Z_{inf}(r_0,s_0)=\tfrac{D_1^* E_{1inf}(r_0,s_0) K_{1m}}{2L_{1M}} \left(\tfrac{r_0^{\mu+\nu-1}-s_0^{\mu+\nu-1}}{K_{1M}r_0^{\mu+\nu-1}+K_{2M}s_0^{\mu+\nu-1}}\right)^2.$$

\item[c)]  For a fixed $s_0>s_1$, if we assume
\begin{equation} \label{X <Y en ro raya}
    X(\overline{r}_0(s_0),s_0)<Y(\overline{r}_0(s_0),s_0)
\end{equation}
then $Z_{inf}(\cdot,s_0)$ is an increasing function that satisfies
 \begin{equation}
 Z_{inf}(\overline{r_0}(s_0),s_0)<B,\qquad Z_{inf}(+\infty,s_0)=j_2(s_0),
 \end{equation}   
 where
 \begin{equation}
   j_2(s_0)=  \tfrac{D_1^* E_{1inf}(+\infty,s_0) K_{1m}}{L_{1M}   K_{1M}^2},
 \end{equation}
 is an increasing function that satisfies $j_2(+\infty)=\tfrac{D_1^* K_{1m}}{2L_{1M}   K_{1M}^2}$.

 \item[e)]  If we assume
 \begin{equation}\label{hipZinfty}
     \tfrac{D_1^* K_{1m}}{2L_{1M}   K_{1M}^2}>B
 \end{equation}
 then there exists a unique $s_2=\min\lbrace s_0\geq s_1: j_2(s_0)\geq B\rbrace $. Moreover, for each $s_0>s_2$, we have that $j_2(s_0)>B$.

 \item[f)] If \eqref{X <Y en ro raya} and \eqref{hipZinfty} holds for each $s_0>s_2$ there exists a unique $r_{B}(s_0)>s_0$ such that $Z_{inf}(r_0,s_0)>B$ for all $r_0>r_{B}(s_0)$.

 \end{itemize}

\end{lemma}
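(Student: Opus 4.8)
The plan is to establish items (a)--(f) in the stated order, drawing on the explicit bounds collected in Lemmas \ref{lema-cotaH}--\ref{lema-cotaFase2} and closing the last two items with elementary monotonicity and intermediate value arguments. Throughout I set $\alpha=\mu+\nu-1$, which is positive since $\mu>2$ by (A5) and $0<\nu<1$. Item (a) is immediate: by \eqref{cotaE1inf} we have $E_1\geq E_{1inf}>0$ and by \eqref{cotaLf1} we have $L(f_1^*)>0$, so the integrand in the definition \eqref{eq43} of $\Phi_1$ is strictly positive on $(s_0,r_0)$; hence $\Phi_1(r_0,s_0,f_1^*,f_2^*)>0$, and since $Q>0$ and $s_0^\nu\exp(-s_0^2)>0$ we conclude $Y(r_0,s_0)<0$. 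For item (b) I would combine the pointwise lower bound $G_1(r_0,s_0,\cdot)\geq G_{1inf}(r_0,r_0,s_0)$ coming from \eqref{cotaG1inf}--\eqref{G1inf} with the upper bound $H\leq H_{sup}$ from \eqref{cotaHsup}--\eqref{Hsup}, which yields $Z\geq D_1^*G_{1inf}(r_0,r_0,s_0)/H_{sup}^2$. The remaining work is purely algebraic: the factors $1/\alpha^2$ cancel, and after rewriting $1/s_0^\alpha-1/r_0^\alpha=(r_0^\alpha-s_0^\alpha)/(s_0^\alpha r_0^\alpha)$ and $K_{1M}/s_0^\alpha+K_{2M}/r_0^\alpha=(K_{1M}r_0^\alpha+K_{2M}s_0^\alpha)/(s_0^\alpha r_0^\alpha)$ the common factor $s_0^\alpha r_0^\alpha$ cancels, leaving exactly the closed form $Z_{inf}(r_0,s_0)$ announced in (b).

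For item (c) I would prove monotonicity of $Z_{inf}(\cdot,s_0)$ by showing that both of its $r_0$-dependent factors are positive and increasing. Writing the rational factor as $g(r_0)=(r_0^\alpha-s_0^\alpha)/(K_{1M}r_0^\alpha+K_{2M}s_0^\alpha)$ and viewing it as a function of $u=r_0^\alpha$, its derivative is $(K_{1M}+K_{2M})s_0^\alpha/(K_{1M}u+K_{2M}s_0^\alpha)^2>0$, so $g$, and hence $g^2$, is positive and increasing for $r_0>s_0$; moreover $E_{1inf}(r_0,s_0)$ is increasing in $r_0$ because $H_{inf}(r_0,s_0)$ is increasing (through its $-1/r_0^\alpha$ term), which makes the positive bracket in the exponent of \eqref{defE1inf} decrease and the exponent less negative. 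Since a product of positive increasing functions is increasing, $Z_{inf}(\cdot,s_0)$ is increasing. The boundary value $Z_{inf}(\overline{r}_0(s_0),s_0)<B$ is where the standing hypothesis \eqref{X <Y en ro raya} enters: that inequality reads $Z(\overline{r}_0,s_0)-B<Y(\overline{r}_0,s_0)$, and combining $Y<0$ from (a) with $Z_{inf}\leq Z$ from (b) gives $Z_{inf}(\overline{r}_0,s_0)\leq Z(\overline{r}_0,s_0)<B$. Letting $r_0\to+\infty$ one has $g(r_0)\to 1/K_{1M}$ and $E_{1inf}(r_0,s_0)\to E_{1inf}(+\infty,s_0)$, so $Z_{inf}(+\infty,s_0)=j_2(s_0)$; finally $j_2$ inherits its monotonicity in $s_0$ and its limit $j_2(+\infty)$ from $E_{1inf}(+\infty,s_0)$, whose exponent tends to $0$ (all its terms carry positive powers of $s_0$ in the denominator), so $E_{1inf}(+\infty,s_0)\uparrow 1$ as $s_0\to+\infty$.

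Items (e) and (f) are then intermediate value arguments. Since $j_2$ is continuous, increasing, and $j_2(+\infty)>B$ under \eqref{hipZinfty}, the set $\{s_0\geq s_1:\ j_2(s_0)\geq B\}$ is nonempty, closed and bounded below, hence has a unique minimum $s_2$, and strict monotonicity gives $j_2(s_0)>B$ for every $s_0>s_2$, which is (e). For (f), fix $s_0>s_2$: by (c) the function $Z_{inf}(\cdot,s_0)$ is continuous and increasing with $Z_{inf}(\overline{r}_0(s_0),s_0)<B$ and $Z_{inf}(+\infty,s_0)=j_2(s_0)>B$, so there is a unique $r_B(s_0)\in(\overline{r}_0(s_0),+\infty)$ with $Z_{inf}(r_B(s_0),s_0)=B$, and $Z_{inf}(r_0,s_0)>B$ for all $r_0>r_B(s_0)$; as $r_B(s_0)>\overline{r}_0(s_0)>s_0$ the claim follows. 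The main obstacle I anticipate is item (c): one must track the monotonicity of $E_{1inf}(\cdot,s_0)$ through its dependence on $H_{inf}$, and correctly dovetail the hypothesis \eqref{X <Y en ro raya} with (a) and (b) to extract the strict bound $Z_{inf}(\overline{r}_0(s_0),s_0)<B$. The algebraic reduction in (b) to the clean $Z_{inf}$ formula is delicate bookkeeping but becomes routine once the cancellations above are spotted.
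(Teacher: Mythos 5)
Your proposal is correct and follows essentially the same route as the paper's (very terse) proof: the same chain $Z_{inf}\leq Z<B+Y<B$ via hypothesis \eqref{X <Y en ro raya} and item a), the same lower bound $D_1^*G_{1inf}/H_{sup}^2$ for item b), and the same monotonicity-plus-intermediate-value arguments for items e) and f); you merely supply the algebra and monotonicity checks that the paper leaves implicit. One remark: your limit computation in c) actually yields $Z_{inf}(+\infty,s_0)=\tfrac{D_1^*E_{1inf}(+\infty,s_0)K_{1m}}{2L_{1M}K_{1M}^2}$, which shows the paper's displayed formula for $j_2(s_0)$ is missing the factor $2$ in the denominator (its stated value of $j_2(+\infty)$ and hypothesis \eqref{hipZinfty} are consistent with your version, not with the paper's formula for $j_2$).
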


\begin{proof}
    \item[a)] It is clear from the definition of the function $Y$ given by \eqref{Y}.

\item[b)] It follows from the inequalities obtained in Lemmas \ref{lema-cotaH} and \ref{lema-cotaFase1}.

\item[c)] From the definition of $Z_{inf}$ it it easy to see that $Z_{inf}(\cdot, s_0)$  is an increasing function for each fixed $s_0>s_1$. In addition, assumption \eqref{X <Y en ro raya} and item a) leads to 
$$Z_{inf}(\overline{r}_0(s_0),s_0)-B<Z(\overline{r}_0(s_0),s_0)-B<Y(\overline{r}_0(s_0),s_0)<0,$$
Then it follows that $Z_{inf}(\overline{r}_0(s_0),s_0)<B$.
Finally, taking a limit gives that $Z_{inf}(+\infty,s_0)=j_2(s_0)$ for each $s_0>s_1$.

\item[e)] First, notice that hypothesis \eqref{hipZinfty} can be rewritten as $j_2(+\infty)>B$. From the fact that $j_2$ is an increasing function, we can conclude that there exists a unique $s_2=\min\left\lbrace s_0\geq s_1 : j_2(s_0)\geq B\right\rbrace$. Notice that  $s_2=s_1$ in case that $j_2(s_1)>B$. As a consequence for each $s_0>s_2$, we get that  $j_2(s_0)>B$.

\item[f)] For each fixed $s_0>s_2$,  we have that $Z_{inf}(\overline{r}_0(s_0),s_0)<B$ from item c) and 
$Z_{inf}(+\infty,s_0)>B$ from item e). 
Then there exists a unique $r_B=r_B(s_0)>\overline{r}_0(s_0)$ such that $Z_{inf}(r_B(s_0),s_0)=B$ and $Z_{inf}(r_0,s_0)>B$ for all $r_0>r_B(s_0)$.
\end{proof}

\begin{lemma}
For each $s_0>s_2$, if we assume \eqref{X <Y en ro raya} and \eqref{hipZinfty} there exists at least one solution $r_0^*=r_0^*(s_0,f_1^*,f_2^*)\in \left( \overline{r}_0(s_0),r_B(s_0)\right)$ to the equation \eqref{Ec1-s0,r0}.
\end{lemma}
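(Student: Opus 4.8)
The plan is to rewrite \eqref{Ec1-s0,r0} in the equivalent form \eqref{ec reemplaza 43} and to apply Bolzano's intermediate value theorem to the scalar function
\[
\Theta_{s_0}(r_0):=X(r_0,s_0)-Y(r_0,s_0),\qquad r_0\in[\overline{r}_0(s_0),r_B(s_0)],
\]
since a zero of $\Theta_{s_0}$ is precisely a solution $r_0^*$ of \eqref{ec reemplaza 43}, hence of \eqref{Ec1-s0,r0}. First I would fix $s_0>s_2$ and invoke Lemma \ref{lem prop ec 43} to guarantee that both endpoints $\overline{r}_0(s_0)$ and $r_B(s_0)$ are well defined and satisfy $\overline{r}_0(s_0)<r_B(s_0)$, so that the interval of application is nonempty.

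Next I would determine the sign of $\Theta_{s_0}$ at the two endpoints. At the left endpoint the assumed inequality \eqref{X <Y en ro raya} gives directly $X(\overline{r}_0(s_0),s_0)<Y(\overline{r}_0(s_0),s_0)$, so $\Theta_{s_0}(\overline{r}_0(s_0))<0$. At the right endpoint I would use item f) of Lemma \ref{lem prop ec 43}, namely $Z_{inf}(r_B(s_0),s_0)=B$, together with the lower bound $Z(r_0,s_0)>Z_{inf}(r_0,s_0)$ from item b); this yields $X(r_B(s_0),s_0)=Z(r_B(s_0),s_0)-B>0$. Combining this with item a), which gives $Y(r_B(s_0),s_0)<0$, I obtain $X(r_B(s_0),s_0)>0>Y(r_B(s_0),s_0)$, that is $\Theta_{s_0}(r_B(s_0))>0$. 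Thus $\Theta_{s_0}$ changes sign across $[\overline{r}_0(s_0),r_B(s_0)]$, and assuming its continuity, the intermediate value theorem produces at least one $r_0^*\in(\overline{r}_0(s_0),r_B(s_0))$ with $\Theta_{s_0}(r_0^*)=0$, which is the desired solution of \eqref{Ec1-s0,r0}.

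The main obstacle is establishing the continuity of $r_0\mapsto\Theta_{s_0}(r_0)$. This reduces to the continuity in $r_0$ of the building blocks $\Phi_1$, $G_1$ and $H$ entering $X$ and $Y$ through \eqref{F1}--\eqref{H} and \eqref{eq43}--\eqref{defE1}. Each of these is an integral functional whose integration limit $r_0$ and whose integrands depend on $r_0$; since the integrands are continuous and uniformly bounded on the relevant interval by hypotheses (A1)--(A4), a direct estimate of the integrals over the varying intervals (or dominated convergence) yields the required continuity. A subtler point, which I would address with care, is that the fixed point $(f_1^*,f_2^*)=(f_1^*,f_2^*)(r_0,s_0)$ itself depends on $r_0$: one must therefore verify continuous dependence of this fixed point on the parameter $r_0$ — for instance using the uniform contraction estimate $\varepsilon(r_0,s_0)<1$ from \eqref{epsilon} together with the continuity of the operator $\Psi$ in $r_0$ — so that the sign information extracted from Lemma \ref{lem prop ec 43} remains valid along the whole interval $[\overline{r}_0(s_0),r_B(s_0)]$.
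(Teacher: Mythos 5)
Your argument is essentially identical to the paper's: both establish $X<Y$ at $\overline{r}_0(s_0)$ from \eqref{X <Y en ro raya} and $X\geq Z_{inf}-B=0>Y$ at $r_B(s_0)$ from items a), b) and f) of Lemma \ref{lem prop ec 43}, then conclude by the intermediate value theorem. Your additional remarks on the continuity of the integral functionals in $r_0$ and on the continuous dependence of the fixed point $(f_1^*,f_2^*)$ on $r_0$ address a point the paper leaves implicit, and are a welcome strengthening rather than a departure.
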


\begin{proof}

 For each $s_0>s_2$, taking into account assumption \eqref{X <Y en ro raya} and the fact that
 from item f) of Lemma \ref{lem prop ec 43} the following inequality holds
    $$X(r_B(s_0),s_0)\geq Z_{inf}(r_B(s_0),s_0)-B=0> Y(r_B(s_0),s_0)$$
    we obtain that  there exists at least one solution $r_0^*\in \left( \overline{r}_0(s_0),r_B(s_0)\right)$ to the equation \eqref{Ec1-s0,r0}.
\end{proof}

Now we will analyze  the equation \eqref{Ec2-s0,r0}. If we replace $r_0$ by $r_0^*(s_0)$ and $(f_1,f_2)$ by $(f_1^*,f_2^*)$,  it results equivalent to
\begin{equation}\label{ec W=M}
    W(r_0^*(s_0),s_0)=M
\end{equation}
where
\begin{equation}\begin{array}{l}
 W(r_0^*(s_0),s_0)=\tfrac{E_1(r_0^*(s_0),s_0,f_1^*,f_2^*)}{r_0^{\nu+1}}\Bigg[Q\exp(-s_0^{2})s_0^{\nu}+\frac{ D_1^*}{H^{2}(r_0^*(s_0),s_0,f_1^*,f_2^*)}H_1(r_0^*(s_0),s_0,f_1^*,f_2^*)\Bigg]  \\ \\
 -\frac{1}{r_0^*(s_0)^{\nu+1} \Phi_2(+\infty,r_0^*(s_0),f_1^*,f_2^*)}\Bigg[1-\frac{ D_2^*}{H^{2}(r_0^*(s_0),s_0,f_1^*,f_2^*)}G_2(+\infty,r_0^*(s_0),f_1^*,f_2^*)\Bigg].
 \end{array}
\end{equation}

\begin{lemma}
If any of the following system of inequalities hold
\begin{equation}\label{hip-ec 44}
    \left\lbrace\begin{array}{ll}
         W_{inf}(s_2)>M\\ \\
         W_{sup}(+\infty)<M
    \end{array}\right.
    \qquad \text{or}\qquad 
     \left\lbrace\begin{array}{ll}
         W_{sup}(s_2)<M\\ \\
         W_{inf}(+\infty)>M
    \end{array}\right.
\end{equation}
then there exists at least one solution $\widehat{s_0}>s_2$ to the equation  \eqref{ec W=M}, 
where
    \begin{equation}\begin{array}{l}
 W_{inf}(s_0)=\tfrac{E_{1inf}(r_0^*(s_0),s_0)}{r_B^{\nu+1}(s_0)}\Bigg[Q\exp(-s_0^{2})s_0^{\nu}+\frac{ D_1^*}{H_{sup}^{2}(r_0^*(s_0),s_0)} H_{1inf}(r_0^*(s_0),s_0)\Bigg]  \\ \\
 -\frac{1}{\overline{r_0}^{\nu+1}(s_0) \Phi_{2inf}(+\infty,r_0^*(s_0),s_0)}+\frac{1}{r_B^{\nu+1}(s_0) \Phi_{2sup}(r_0^*(s_0))}\frac{ D_2^*}{H_{sup}^{2}(r_0^*(s_0),s_0)}G_{2inf}(+\infty,r_0^*(s_0),s_0).
 \end{array}
\end{equation}
\begin{equation}\begin{array}{l}
 W_{sup}(s_0)=\tfrac{1}{\overline{r_0}^{\nu+1}(s_0)}\Bigg[Q\exp(-s_0^{2})s_0^{\nu}+\frac{ D_1^*}{H_{inf}^{2}(r_0^*(s_0),s_0)} H_{1sup}(r_0^*(s_0),s_0)  \\ \\
 +\frac{1}{\Phi_{2inf}(+\infty,r_0^*(s_0),s_0)}\frac{ D_2^*}{H_{inf}^{2}(r_0^*(s_0),s_0)}G_{2sup}(r_0^*(s_0),s_0).\Bigg].
 \end{array}
\end{equation}

\end{lemma}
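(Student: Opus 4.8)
The plan is to reduce \eqref{ec W=M} to a scalar root-finding problem and solve it by Bolzano's intermediate value theorem. Concretely, for a fixed choice of the branch $s_0\mapsto r_0^*(s_0)$ furnished by the previous lemma, I would introduce the one-variable function $\omega(s_0):=W(r_0^*(s_0),s_0)-M$ on $(s_2,+\infty)$ and show that it changes sign; any sign-changing point then yields the desired $\widehat{s_0}$.

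First I would verify the sandwich estimate $W_{inf}(s_0)\le W(r_0^*(s_0),s_0)\le W_{sup}(s_0)$ for every $s_0>s_2$. This is obtained term by term from the bounds already proved: the exponential factor $E_1$ is controlled by \eqref{cotaE1inf}--\eqref{cotaE1sup}, the bracket containing $H_1$ by \eqref{cotaH1inf}--\eqref{cotaH1sup}, the denominators $\Phi_2$ and $H$ by \eqref{cotaPhi2inf}--\eqref{cotaPhi2sup} and \eqref{cotaHinf}--\eqref{cotaHsup}, and $G_2$ by \eqref{cotaG2inf}--\eqref{cotaG2sup}, all from Lemmas~\ref{lema-cotaH}, \ref{lema-cotaFase1} and \ref{lema-cotaFase2}. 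The only extra input is the localization $r_0^*(s_0)\in(\overline{r}_0(s_0),r_B(s_0))$, which lets one replace the explicit powers $r_0^{\nu+1}$ by $\overline{r}_0^{\nu+1}(s_0)$ or $r_B^{\nu+1}(s_0)$ with the correct monotonicity, taking care of the opposite signs carried by the two bracketed contributions in the definition of $W$.

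Next I would read off the sign change from \eqref{hip-ec 44}. Under the first system, the lower bound at $s_2$ gives $\omega(s_2)\ge W_{inf}(s_2)-M>0$, while $W_{sup}(+\infty)<M$ provides a large $s_0$ with $\omega(s_0)\le W_{sup}(s_0)-M<0$; under the second system the two inequalities are reversed, so that $\omega(s_2)<0$ and $\omega$ is eventually positive. In either case $\omega$ attains values of opposite sign at two points of $(s_2,+\infty)$, and continuity of $\omega$ then yields, by Bolzano's theorem, a point $\widehat{s_0}>s_2$ with $\omega(\widehat{s_0})=0$, which is precisely \eqref{ec W=M}.

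The hard part is the continuity of $\omega$, i.e.\ of $s_0\mapsto W(r_0^*(s_0),s_0)$, and this is where care is required. The difficulty is twofold: the melting-front parameter $r_0^*(s_0)$ is only known to \emph{exist} as a solution of \eqref{Ec1-s0,r0} (not to be unique), and the profiles $f_1^*,f_2^*$ appearing in $W$ are themselves the fixed points of $\Psi$ and hence depend on $(r_0,s_0)$. I would handle this by first invoking continuous dependence of the fixed point $(f_1^*,f_2^*)$ on the parameters $(r_0,s_0)$ — a consequence of the uniform contraction estimate underlying Theorem~\ref{teo punto fijo} together with the continuity of all the kernels $E_i,\Phi_i,G_i,H_i,H$ in $(r_0,s_0)$ — and then selecting a continuous branch $s_0\mapsto r_0^*(s_0)$ of roots of \eqref{Ec1-s0,r0}. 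Once such a branch is fixed, $\omega$ is a composition of continuous maps, and the proof concludes. I expect the branch selection to be the genuinely delicate point, since without uniqueness of $r_0^*$ one must argue that a continuous selection exists on the relevant interval.
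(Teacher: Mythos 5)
The paper offers no proof of this lemma at all: it is stated and immediately followed by the sentence introducing the existence theorem. So there is no argument of the authors to compare yours against; your strategy --- sandwich $W_{inf}(s_0)\le W(r_0^*(s_0),s_0)\le W_{sup}(s_0)$ obtained term by term from Lemmas~\ref{lema-cotaH}, \ref{lema-cotaFase1}, \ref{lema-cotaFase2} together with the localization $r_0^*(s_0)\in(\overline{r}_0(s_0),r_B(s_0))$, then a sign change forced by \eqref{hip-ec 44} and Bolzano's theorem --- is evidently the intended one, and the derivation of the two bounds, including the sign bookkeeping on the two bracketed contributions, is correct.

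The genuine gap is the one you yourself flag but do not close: the continuity of $\omega(s_0)=W(r_0^*(s_0),s_0)-M$ on $(s_2,+\infty)$. Continuous dependence of the fixed point $(f_1^*,f_2^*)$ on $(r_0,s_0)$ can plausibly be extracted from the uniform contraction estimate of Theorem~\ref{teo punto fijo}, but the existence of a \emph{continuous single-valued branch} $s_0\mapsto r_0^*(s_0)$ does not follow from the mere existence of a root of \eqref{Ec1-s0,r0} in $(\overline{r}_0(s_0),r_B(s_0))$: without uniqueness (or at least monotonicity or nondegeneracy of $X-Y$ in $r_0$) the root set can jump, and then $\omega$ need not be continuous and the intermediate value theorem does not apply. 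To make the argument rigorous one must either prove uniqueness of $r_0^*$, or replace branch selection by an argument that tolerates multivaluedness --- e.g.\ observe that the bounds $W_{inf},W_{sup}$ hold uniformly over \emph{every} root $r_0^*\in(\overline{r}_0(s_0),r_B(s_0))$ and run a connectedness/degree argument on the solution set of the pair \eqref{Ec1-s0,r0}, \eqref{ec W=M} in the $(s_0,r_0)$ plane, or take $r_0^*(s_0)$ to be the infimum of the root set and establish the appropriate semicontinuity. A second, minor point: the hypotheses involve $W_{inf}(s_2)$ and $W_{sup}(s_2)$, whereas $r_0^*(s_0)$ is only furnished for $s_0>s_2$; the endpoint values should be understood as limits, and the sign of $\omega$ near $s_2$ deduced on an interval $(s_2,s_2+\delta)$ rather than at $s_2$ itself. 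Neither issue is addressed in the paper, so your proposal is, if anything, more candid than the source about where the difficulty lies --- but as written it does not yet constitute a complete proof.
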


The prior analysis allows to establish the following existence theorem.

\begin{theorem}
    Assuming (A1)-(A5), \eqref{hip-epsilon1}, \eqref{X <Y en ro raya}, \eqref{hipZinfty}   and \eqref{hip-ec 44} there exists at least one solution $(\widehat{s_0},r_0^*(\widehat{s_0}),f_1^*,f_2^*)$ where $(f_1^*,f_2^*)$ is the unique fixed point of the operator $\Psi$ corresponding to $(\widehat{s_0},r_0^*(\widehat{s_0}))\in \Sigma$.
\end{theorem}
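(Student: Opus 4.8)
The plan is to assemble the staged existence results of Section 3, since the final statement is a synthesis rather than a new estimate: the four equations characterizing a similarity solution, namely \eqref{eq41}, \eqref{eq42}, \eqref{Ec1-s0,r0} and \eqref{Ec2-s0,r0}, have been arranged into a hierarchy. For each admissible pair $(r_0,s_0)\in\Sigma$ the profiles $f_1,f_2$ are pinned down as the unique fixed point of $\Psi$; equation \eqref{Ec1-s0,r0} then selects $r_0$ as a function of $s_0$; and equation \eqref{Ec2-s0,r0} finally selects $s_0$. I would run this chain in the order of dependence.

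First I would invoke the last lemma: under \eqref{X <Y en ro raya}, \eqref{hipZinfty} and \eqref{hip-ec 44} there is at least one $\widehat{s_0}>s_2$ solving \eqref{ec W=M}, which is precisely \eqref{Ec2-s0,r0} evaluated at $r_0=r_0^*(\widehat{s_0})$ with $(f_1,f_2)=(f_1^*,f_2^*)$. For this $\widehat{s_0}$, the preceding lemma furnishes a root $r_0^*(\widehat{s_0})\in\bigl(\overline{r}_0(\widehat{s_0}),r_B(\widehat{s_0})\bigr)$ of \eqref{Ec1-s0,r0}. I would then verify membership in $\Sigma$: since $\widehat{s_0}>s_2\geq s_1$ and $r_0^*(\widehat{s_0})>\overline{r}_0(\widehat{s_0})$, the pair $(r_0^*(\widehat{s_0}),\widehat{s_0})$ satisfies the two defining inequalities in \eqref{sigma}. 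With this pair in $\Sigma$, Theorem \ref{teo punto fijo} (applicable because \eqref{hip-epsilon1} is assumed) yields the unique fixed point $(f_1^*,f_2^*)\in\mathcal{K}$ of $\Psi$, which solves \eqref{eq41}--\eqref{eq42}. Collecting these facts, the tuple $(\widehat{s_0},r_0^*(\widehat{s_0}),f_1^*,f_2^*)$ satisfies all four equations simultaneously, so it determines a similarity solution; the potentials $\phi_1,\phi_2$ are then recovered explicitly from \eqref{phi1}--\eqref{phi2}.

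The main obstacle, and the point I would treat with care, is the circular dependence concealed in the notation $r_0^*=r_0^*(s_0,f_1^*,f_2^*)$: the fixed point $(f_1^*,f_2^*)$ itself depends on $(s_0,r_0)$, so once it is substituted, \eqref{Ec1-s0,r0} and \eqref{Ec2-s0,r0} are genuinely equations in $(s_0,r_0)$ whose left-hand sides depend on the parameters both explicitly and implicitly through the fixed point. The intermediate-value arguments of the two preceding lemmas are legitimate because the sign information comes from the uniform bounds on $X,Y,Z$ and $W$ provided by Lemmas \ref{lema-cotaH}, \ref{lema-cotaFase1} and \ref{lema-cotaFase2}, which hold for every $\vec{f}\in\mathcal{K}$ and therefore for the fixed point irrespective of its precise parametric value. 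What those arguments still require, however, is the continuity of the composed maps $r_0\mapsto X(r_0,s_0)-Y(r_0,s_0)$ and $s_0\mapsto W(r_0^*(s_0),s_0)$, i.e. the continuous dependence of the Banach fixed point on $(s_0,r_0)$. This is delicate because varying $r_0$ even changes the underlying space $C[s_0,r_0]$; I would establish it from the fact that the contraction constant $\varepsilon(r_0,s_0)$ stays bounded away from $1$ on $\Sigma$, so that the fixed point varies continuously with the parameters, and I would make this parametric continuity explicit before concluding, as it is the only step in the synthesis that goes beyond bookkeeping.
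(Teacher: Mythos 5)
Your proposal is correct and follows essentially the same route as the paper, which offers no explicit proof beyond the remark that ``the prior analysis allows to establish'' the theorem: one assembles Theorem \ref{teo punto fijo} for $(f_1^*,f_2^*)$, the intermediate-value lemma producing $r_0^*(s_0)\in(\overline{r}_0(s_0),r_B(s_0))$, and the final lemma producing $\widehat{s_0}>s_2\geq s_1$, checking membership of $(r_0^*(\widehat{s_0}),\widehat{s_0})$ in $\Sigma$ along the way. Your closing observation is well taken: the continuity in $(r_0,s_0)$ of the composed maps $X-Y$ and $W(r_0^*(\cdot),\cdot)$ --- equivalently, the continuous dependence of the Banach fixed point on the parameters, including the variation of the underlying space $C[s_0,r_0]$ --- is tacitly assumed by the paper's intermediate-value arguments and is nowhere established there, so making it explicit as you propose genuinely strengthens the argument rather than merely reproducing it.
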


\begin{corollary}
   If (A1)-(A5), \eqref{hip-epsilon1}, \eqref{X <Y en ro raya}, \eqref{hipZinfty}   and \eqref{hip-ec 44}  hold there exists at least one solution to the problem \eqref{eq4}-\eqref{eq16} where
    $$\left\lbrace\begin{array}{lll}
    T_1(z,t)=T_m f_1^*\left(\tfrac{z}{2a \sqrt{t}} \right)+T_m,\qquad  &s(t)\leq z\leq r(t),\; t>0 \\ \\
    T_2(z,t)=T_m f_2^*\left(\tfrac{z}{2a \sqrt{t}} \right)+T_m,\qquad &z\geq r(t),\; t>0 \\ \\
 \varphi_1(z,t)=\dfrac{U_c}{2}\cdot\dfrac{F_1\left(\tfrac{z}{2a \sqrt{t}} ,\widehat{s_0},f_1^*\right)}{H(r_0^*(\widehat{s_0}),\widehat{s_0},f_1^*,f_2^*)}\qquad &s(t)\leq z\leq r(t),\; t>0 \\ \\
     \varphi_2(z,t)=\dfrac{U_c}{2}\cdot\dfrac{F_1(r_0^*(\widehat{s_0}),\widehat{s_0},f_1^*)+F_2\left(\tfrac{z}{2a \sqrt{t}} ,r_0^*(\widehat{s_0}),f_2^*\right)}{H(r_0^*(\widehat{s_0}),\widehat{s_0},f_1^*,f_2^*)},\qquad &z\geq r(t),\; t>0 \\ \\
    \end{array}\right.
    $$
with $s(t)=2a\widehat{s_0} \sqrt{t}$ and $r(t)=2ar_0^*(\widehat{s_0}) \sqrt{t}$.
\end{corollary}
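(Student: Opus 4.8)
The plan is to read this corollary as the translation, back into the original physical variables, of the existence result just established for the reduced system. The preceding existence theorem furnishes a quadruple $(\widehat{s_0},r_0^*(\widehat{s_0}),f_1^*,f_2^*)$ with $(\widehat{s_0},r_0^*(\widehat{s_0}))\in\Sigma$ solving the four equations \eqref{eq41}, \eqref{eq42}, \eqref{Ec1-s0,r0} and \eqref{Ec2-s0,r0}, and it remains only to exhibit the candidate fields and check that they satisfy \eqref{eq4}-\eqref{eq16}. First I would define $\phi_1^*,\phi_2^*$ through the explicit representations \eqref{phi1}-\eqref{phi2} evaluated at $(\widehat{s_0},r_0^*(\widehat{s_0}),f_1^*,f_2^*)$, and then set $T_i(z,t)=T_m f_i^*(\eta)+T_m$ and $\varphi_i(z,t)=\phi_i^*(\eta)$ with $\eta=z/(2a\sqrt t)$, together with $s(t)=2a\widehat{s_0}\sqrt t$ and $r(t)=2ar_0^*(\widehat{s_0})\sqrt t$ as in \eqref{eq24}-\eqref{eq25}; these are precisely the formulas displayed in the corollary.

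The core of the argument is that the similarity reduction carried out in Section 2 is a genuine equivalence, so I would verify both directions. Substituting the ansatz into \eqref{eq4}-\eqref{eq7} and applying the chain rule (using $a^2=\lambda_0/(\rho_0 c_0)$) converts the two heat equations into the ODE \eqref{eq26} and the two potential equations into \eqref{eq27}; the same substitution turns the fixed-face and interface relations \eqref{eq2}-\eqref{eq16} into \eqref{s0}-\eqref{eq35}, where in particular the flux condition \eqref{eq8} becomes \eqref{eq28} and the Stefan condition \eqref{eq12} becomes \eqref{eq32} with $M=2l_m\gamma_m a^2/(\lambda_0 T_m)$. Conversely, differentiating the integral representations recovers the ODEs, so a solution of the reduced system yields, after the inverse transformation, a solution of the free-boundary problem.

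It then suffices to confirm that the four reduced equations, together with \eqref{phi1}-\eqref{phi2}, encode every condition in \eqref{eq26}-\eqref{eq35}. Evaluating \eqref{eq41} at $\eta=r_0$ gives $f_1(r_0)=0$ because $\Phi_1$ and $G_1$ vanish there, while differentiating and using that $E_1$ equals $1$ and $H_1$ vanishes at $\eta=s_0$ returns the flux condition \eqref{eq28}; evaluating at $\eta=s_0$, where $\Phi_1$ and $G_1$ again vanish, reduces the requirement $f_1(s_0)=B$ exactly to \eqref{Ec1-s0,r0}. Analogously \eqref{eq42} enforces $f_2(r_0)=0$ and $f_2(+\infty)=-1$, and \eqref{Ec2-s0,r0} is the transformed Stefan condition. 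Finally, \eqref{phi1}-\eqref{phi2} are by construction the unique solution of the first-order-in-$\phi_i'$ problem \eqref{eq27} subject to $\phi_1(s_0)=0$ (namely \eqref{eq29}), the continuity \eqref{eq31}, the current continuity \eqref{eq33} and the limit \eqref{eq35}, so every electrical condition holds automatically.

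The bookkeeping above is routine; the genuine obstacle is the regularity and well-posedness of the reconstructed fields. I would have to confirm that the integrating factors $E_1,E_2$ in \eqref{defE1} and \eqref{eq48} are well defined and strictly positive (which follows from the bounds in Lemmas \ref{lema-cotaFase1}-\ref{lema-cotaFase2}), that the improper integrals defining $F_2$, $\Phi_2$, $G_2$, $H$ and $f_2^*$ converge at $+\infty$ and that the limits $f_2^*(+\infty)=-1$ and $\phi_2^*(+\infty)=U_c/2$ are attained — this is where assumption (A5), $\mu>2$, is essential, since it guarantees that the integrands arising from the convective and Thomson terms decay like negative powers of $v$ with exponent below $-1$, so that the relevant integrals are finite. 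One must also observe that the fixed point $(f_1^*,f_2^*)$ produced by Theorem \ref{teo punto fijo} corresponds to the same admissible pair $(\widehat{s_0},r_0^*(\widehat{s_0}))\in\Sigma$ selected when solving \eqref{Ec1-s0,r0}-\eqref{Ec2-s0,r0}, so that the a priori bounds of the lemmas indeed apply to the functions being substituted back; with these points in place the corollary follows.
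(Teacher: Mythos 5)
Your proposal is correct and follows essentially the same route as the paper, which states this corollary without an explicit proof precisely because it is the composition of the equivalence established in Section 2 (similarity solution of \eqref{eq4}--\eqref{eq16} $\Leftrightarrow$ solution of \eqref{eq41}, \eqref{eq42}, \eqref{Ec1-s0,r0}, \eqref{Ec2-s0,r0} together with the explicit formulas \eqref{phi1}--\eqref{phi2}) with the preceding existence theorem. Your additional remarks on the convergence of the improper integrals and the role of (A5) make explicit some verifications the paper leaves implicit, but they do not change the structure of the argument.
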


The contribution of the section on existence and uniqueness in our paper lies in the development of a comprehensive mathematical model that captures the intricate behavior of electromagnetic fields and heat transfer in closed electrical contacts. We address the complex phenomenon of instantaneous micro-asperity explosion, incorporating vaporization, liquid, and solid zones, while accounting for non-linear thermal coefficients and temperature-dependent electrical conductivity.

Our proposed solutions, based on similarity transformations, facilitate the reduction of the problem to ordinary differential equations, streamlining the analysis process. We rigorously establish the validity of this approach through discussions and proofs supported by fixed point theory in Banach space. This section not only offers theoretical insights into the existence and uniqueness of solutions but also provides practical methodologies for tackling the intricate dynamics of electrical contacts under extreme conditions.
\section{Conclusions}
In conclusion, this study has presented novel mathematical models to characterize the intricate dynamics of electromagnetic fields and heat transfer within closed electrical contacts, particularly focusing on the instantaneous explosion of micro-asperities. These explosions give rise to complex phenomena involving vaporization zones, as well as liquid and solid phases, where temperatures are governed by a generalized heat equation incorporating Thomson effect considerations.

One of the key strengths of our models lies in their ability to capture the nonlinear behavior exhibited by thermal coefficients and electrical conductivity, both of which are temperature-dependent. By employing similarity transformations, we have effectively reduced the problem to a set of ordinary differential equations, thereby facilitating tractable analysis and solution.

The validity and utility of our approach have been rigorously demonstrated through discussions and proofs grounded in fixed point theory within the framework of Banach spaces. This theoretical underpinning not only enhances our confidence in the proposed solutions but also provides a solid foundation for future research endeavors in related domains.

Furthermore, the insights gained from this study hold significant implications for various practical applications involving electrical contacts, such as in the design and optimization of electronic devices, electrical connectors, and power transmission systems. By elucidating the intricate interplay between electromagnetic fields and heat transfer phenomena, our work contributes to advancing the understanding and engineering of such systems in both industrial and academic contexts.

In essence, this research underscores the importance of integrating mathematical modeling with theoretical analysis to unravel the complexities inherent in multifaceted physical processes. Moving forward, we envision further refinement and extension of our models to address additional complexities and pave the way for enhanced performance and reliability of electrical contact systems in diverse real-world scenarios.

\section{Acknowledgement}
The study was sponsored by AP19675480 "Problems of heat conduction with a free boundary arising in modeling of switching processes in electrical devices" from Ministry of Sciences and Higher Education of the Republic of Kazakhstan and projects  O06-24CI1901 and  O06-24CI1903  from Austral University, Rosario

\end{document}